\definecolor{mygreen}{rgb}{0.01,0.6,0.2}
\definecolor{myblue}{rgb}{0.01, 0.18, 1.0}
\newtheorem{theorem}{Theorem}
\newtheorem{proposition}[theorem]{Proposition}
\theoremstyle{definition}
\newtheorem{remark}[theorem]{Remark}
\newtheorem{example}[theorem]{Example}
\numberwithin{equation}{section}
\numberwithin{theorem}{section}
\numberwithin{equation}{section}
\numberwithin{theorem}{section}
\newcommand{\al} {\alpha}
\newcommand{\pa} {\partial}
\newcommand{\be} {\beta}
\newcommand{\de} {\delta}
\newcommand{\De} {\Delta}
\newcommand{\ga} {\gamma}
\newcommand{\Ga} {\Gamma}
\newcommand{\om} {\omega}
\newcommand{\Om} {\Omega}
\newcommand{\la} {\lambda}
\newcommand{\no} {\nonumber}
\newcommand{\noi} {\noindent}
\newcommand{\ep} {\epsilon}
\newcommand{\ra} {\rightarrow}
\DeclarePairedDelimiter\abs{\lvert}{\rvert}%
\DeclarePairedDelimiter\norm{\lVert}{\rVert}%
\let\oldnorm\norm
\def\norm{\@ifstar{\oldnorm}{\oldnorm*}}
\newcommand{\hst}{{\dot{H}}^s(\RN)}
\newcommand{\intRn}{\displaystyle{\int_{\mathbb{R}^N}}}
\newcommand\restr[2]{{
  \left.\kern-\nulldelimiterspace 
  #1 
  \right|_{#2} 
  }}
\def\C{{\mathcal C}}
\def\N{{\mathbb N}}
\def\F{{\mathcal F}}
\def\R{{\mathbb R}}
\def\RN{{\mathbb R}^N}
\def\({{\Big(}}
\def\){{\Big)}}
\def\ws2{{\F_{\frac{N}{2}}}}
\def\c1{{\C_c^1}}
\def\dr{{\rm d}r}
\def\dz{{\rm d}z}
\def\dx{{\rm d}x}
\def\dy{{\rm d}y}
\def\R{{\mathbb R}}
\def\C{{\mathcal C}}
\def\R{{\mathbb R}}
\def\N{{\mathbb N}}
\def\F{{\mathcal F}}
\def\ws2{{\F_{\frac{N}{2}}}}
\def\l2{{ L^{1,\;\infty}(\log L)^2}}
\title[Study of fractional semipositone problems on $\RN$]{Study of fractional semipositone problems on $\RN$}
\author[N. Biswas]{Nirjan Biswas}
\subjclass{35R11, 35J50, 35B65, 35B09}
\keywords{semipositone problems; fractional operator; uniform regularity estimates; positive solutions}
\email{nirjan22@tifrbng.res.in, nirjaniitm@gmail.com}
\begin{document}

\maketitle

\centerline{Tata Institute of Fundamental Research, Centre For Applicable Mathematics,}
\centerline{Post Bag No 6503, Sharada Nagar,}
\centerline{Bangalore 560065, India}

\begin{abstract} 
Let $s \in (0,1)$ and $N >2s$. In this paper, we consider the following class of nonlocal semipositone problems:
\begin{align*}
    (-\Delta)^s u=  g(x)f_a(u)  \text {  in  }  \mathbb{R}^N, \; u > 0 \text{ in } \mathbb{R}^N, 
\end{align*}
where  the weight $g \in L^1(\mathbb{R}^N) \cap L^{\infty}(\mathbb{R}^N)$ is positive, $a>0$ is a parameter, and $f_a \in \C(\mathbb{R})$ is strictly negative on $(-\infty,0]$. For $f_a$ having subcritical growth and weaker Ambrosetti-Rabinowitz type nonlinearity, we prove that the above problem admits a mountain pass solution $u_a$, provided `$a$' is near zero. To obtain the positivity of $u_a$, we establish a Brezis-Kato type uniform estimate of $(u_a)$ in $L^r(\mathbb{R}^N)$ for every $r \in [\frac{2N}{N-2s}, \infty]$.
\end{abstract} 

\section{Introduction }\label{intro}
In this present paper, we deal with a class of nonlocal semipositone problems on $\RN$. Precisely, for $s \in (0,1)$ and $N >2s$, we consider the following nonlocal semilinear equation:  
\begin{align}\label{SP}\tag{SP}
(-\De)^s u=  g(x)f_a(u)  \text {  in  }  \RN, 
\end{align}
where the weight $g \in L^1(\RN) \cap L^{\infty}(\RN)$ is positive and satisfies \ref{g1}, $a>0$ is a parameter, and $f_a \in \C(\mathbb{R})$ has the following form: 
\begin{align*}
    f_a(t) = \left\{\begin{array}{ll} 
            f(t)-a , & \text {if }  t \ge 0; \\ 
            -a, & \text{if} \; t \le 0,  \\
             \end{array} \right. \text{ with } f \in \C(\R^+) \text{ satisfying } f(0)=0.
\end{align*}
Moreover, $f$ satisfies the following hypothesis: 
\begin{enumerate}[label={($\bf f{\arabic*}$)}]
\setcounter{enumi}{0}
    \item \label{f1} $\displaystyle \lim_{t \rightarrow 0}\frac{f(t)}{t}=0$,  $\displaystyle \lim_{t \rightarrow \infty}\frac{f(t)}{t}=\infty$, and $\displaystyle \lim_{t \rightarrow \infty}\frac{f(t)}{t^{\ga-1}} \le C(f)$ for some $\gamma\in (2, 2^*_s)$ and $C(f)>0$, \vspace{0.2 cm}
    \item \label{f2} there exists  $R>0$ such that  $\displaystyle \frac{f(t)}{t}$ is increasing for $t>R$,
\end{enumerate}
where $2^*_s = \frac{2N}{N-2s}$ is the critical fractional exponent.  
The linear operator $(-\De)^{s}$ is called the fractional Laplacian defined as
$$
(-\Delta)^{s} u(x):=2 \lim_{\epsilon \rightarrow 0^{+}} \int_{\mathbb{R}^{N} \backslash B_{\epsilon}(x)} \frac{u(x)-u(y)}{|x-y|^{N+2s}} \,\dy, \;\;\;x\in \RN,
$$
where $B_{\epsilon}(x)$ denotes the ball of radius $\epsilon$ and centred at $x$. Due to the presence of the strictly negative quantity on the R.H.S. of \eqref{SP} in the regions where $u \le 0$ and certain portion of $u> 0$, the problem \eqref{SP} is called \textit{semipositone} in the literature. Semipositone problems have applications in mathematical physics, biology, engineering etc. More preciously, in the logistic equation, mechanical systems, suspension bridges, population model, etc.; see for example \cite{JS2010, OSS02}. 

In the local case, the semipositone problems were first observed by Brown and Shivaji in \cite{BS83} while studying the perturbed bifurcation problem  $-\Delta u =\lambda(u-u^3) -\epsilon$ in $\Om$, $u > 0$ in $\Om$, $u=0$ on $\pa \Om$, where $\la, \ep >0$ and $\Om$ is a bounded domain. In this work, the authors used the sub-super solution method to get positive solutions. Observe that $u=1$ is a supersolution for this problem since the R.H.S. of the equation is negative at $u=1$. To obtain an appropriate positive subsolution, the authors used the anti-maximum principle due to Cl\'{e}ment and Peletier. Later, many authors studied the following semipositone problem on a bounded domain $\Om$:
\begin{align}\label{GSP}
-\Delta u = \lambda f(u)  \text {  in  }  \Om, \; u > 0 \text {  in  }  \Om, \, \text{ and } \, u=0  \text{  on  } \pa \Om,
\end{align}
where $\la >0$, $f:\R^+ \ra \R$ is continuous, increasing and $f(0)<0$. For example, we refer \cite{CHS95, CS88, CS89, CDS15, DZ05} where various growth conditions and nonlinearities on the function $f$ are imposed to find the existence of positive solutions for \eqref{GSP}. 
In \cite{AHS20}, Alves et al. considered the following semipositone problem:
\begin{align}\label{Alves}
    -\Delta u = g(x)f_a(u)  \text {  in  }  \mathbb{R}^N, 
\end{align}
with $f_a(t)= f(t)-a$ for $t>0$, $f_a(t)=-a(t+1)$ for $t \in [-1,0]$, and $f_a(t)=0$ for $t \le -1$, where the function $f \in \C(\R^+)$ satisfies $f(0)=0$, is locally Lipschitz, has superlinear growth conditions and Ambrosetti-Rabinowitz (see \cite{AR1973}) type nonlinearities. Meanwhile, the weight function $g$ is assumed to be positive, radial, lies in $L^1(\RN) \cap L^{\infty}(\RN)$ and satisfies the following bound: 
\begin{align}\label{Alves1}
 |x|^{N-2} \intRn \frac{g(|y|)}{|x-y|^{N-2}} \, \dy \le C(g), \quad \text{for } x \in \RN \setminus \{ 0 \}, \text{ where } C(g)>0.
\end{align}
The authors used the regularity estimate by Brezis and Kato in \cite{BK1979} and the Reisz potential for the Laplace operator to establish uniform boundedness of mountain pass solutions of \eqref{Alves} in $L^{\infty}(\RN)$, with respect to the parameter `$a$' near zero. The authors then obtained a positive solution of \eqref{Alves} using this uniform regularity estimate, the strong maximum principle, and the condition \eqref{Alves1}. For more results related to semipositone problems, we refer \cite{CFL16, CQT17} and the references therein. 

Now, we shift our discussion to the nonlocal case. In the past years, a significant amount of attention has been given to the study of fractional laplacian due to its numerous applications in mathematical physics, engineering and related fields. For example, in linear drift-diffusion equations \cite{CI2017}, image denoising \cite{GH2015}, quasi-geostrophic flows \cite{Barbi_2022}, and bound-state problems  \cite{SB1951}, to name a few.  Several authors recently studied the nonlocal semipositone problems on a bounded domain $\Om$. In \cite{DT21}, the authors considered a multiparameter fractional semipositone problem  $(-\De)^s u=\la (u^q-1) +\mu u^r$ in $\Om$, $u>0$ in $\Om$, $u=0$ in $\RN \setminus \Om$, where  $\la, \mu >0$ are parameters, $N>2s$, and $0<q<1<r\le 2^*_s -1$. For a certain range of $\la$ and $\mu$, the authors proved the existence of positive solution for this problem. Their proof relies on the construction of a positive subsolution. Later, in \cite{LLV2022}, the authors studied the nonlocal nonlinear semipositone problem $(-\De)_p^s u = \la f(u)$ in $\Om$, $u=0$ in $\RN \setminus \Om$, where $(-\De)_p^s$ is the fractional $p$-Laplace operator, $\la>0$, $f \in \C(\R)$ has superlinear, subcritical growth and $f(s)=0$ for $s \le -1$. The authors obtained at least one positive solution provided the parameter $\la$ is sufficiently small. Their proof uses regularity results up to the boundary of $\Om$ and Hopf’s Lemma for $(-\De)_p^s$. To our knowledge, nonlocal semipositone problems on an unbounded domain have not been studied yet.

In this paper, we consider the nonlocal counterpart of $-\De u=  g(x)(f(u)-a)$ in $\RN$, $u>0$ in $\RN$ (studied in \cite{AHS20}). On the weight function $g$, we impose a nonlocal analogue of \eqref{Alves1}. With subcritical, superlinear and without Ambrosetti-Rabinowitz growth conditions (see \cite{AR1973}) for $f$, our primary concern is to establish the existence of positive solution to \eqref{SP}, depending on the parameter `$a$'. The techniques used in \cite{DT21, LLV2022} to get positive solution are not adoptable in this context. Our procedure to find non-negative solution for \eqref{SP} is motivated by \cite{AHS20}, where the uniform regularity estimate (with respect to `$a$') of solutions in $L^{\infty}(\RN)$ plays a major role. In \cite[Proposition 5.1.1]{DMV17}, for $g \in L^1(\RN) \cap L^{\infty}(\RN)$, and $\abs{f(x,t)} \le C(1+ \abs{t}^p); 1 \le p \le 2^*_s-1$, the authors proved that every non-negative solution to the problem $(-\De)^s u= g(x)f(x,u)$ in $\RN$ is bounded. However, this regularity result is not applicable in our situation. Also, the Brezis-Kato type regularity estimate for weak solution to \eqref{SP} is unknown.

We consider the homogeneous fractional Sobolev space ${\dot{H}}^s(\RN)$ (introduced in \cite{DMV17}) defined as 
$$\hst:= \text{closure of } \\C_c^1(\RN) \text{ with respect to } \norm{\cdot}_{\hst},$$
where $\norm{\cdot}_{\hst} := [\cdot]_{s,2} + \norm{\cdot}_{L^{2^*_s}(\RN)}$, and
\begin{align*}
    [u]_{s,2}:= \left( \; \iint\limits_{\RN \times \RN}\frac{(u(x)-u(y))^2}{|x-y|^{N+2s}}\,\dx\dy \right)^{\frac{1}{2}},
\end{align*}
is the Gagliardo seminorm. 
A function $u\in \hst$ is a weak solution of \eqref{SP} if the following identity holds:
\begin{align*}
    \iint\limits_{\RN \times \RN}\frac{(u(x)-u(y)(\phi(x) - \phi(y))}{|x-y|^{N+2s}}\,\dx\dy = \intRn g(x)f_a(u) \phi(x) \, \dx, \quad \forall \, \phi \in  \hst.
\end{align*}

We say a weak solution $u$ is a mountain pass solution of \eqref{SP} if it is a critical point of $\C^1$ energy functional associated with \eqref{SP}, which satisfies the mountain pass geometry and a weaker Palais-Smale condition (see Proposition \ref{I_a properties} and \cite[Theorem~2.1]{S1991}), and moreover, the value of the energy functional at $u$ possesses a min-max characterization (see \eqref{MP2}).  

\begin{theorem}\label{existence}
Let $s \in (0,1)$ and $N > 2s$. Assume that $f$ satisfies {\rm \ref{f1}} and {\rm \ref{f2}}. Let $g \in L^1(\RN) \cap L^{\infty}(\RN)$ be positive. Then there exists $a_1>0$ such that for each $a \in (0, a_1)$, \eqref{SP} admits a mountain pass solution. Moreover, there exist $a_2 \in (0, a_1)$ and $C>0$, such that $\norm{u_a}_{\hst} \le C$, for all $a \in (0, a_2).$
\end{theorem}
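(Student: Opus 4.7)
The plan is to apply a Cerami-type mountain pass theorem to the energy functional
\[
I_a(u) = \tfrac{1}{2}[u]_{s,2}^2 - \intRn g(x) F_a(u)\, \dx, \qquad u \in \hst,
\]
where $F_a(t) = \int_0^t f_a(\tau)\, \d\tau$; globally one may write $F_a(u) = F(u^+) - au$, with $F(t) = \int_0^t f(\tau)\, \d\tau$. First I would check $I_a \in \C^1(\hst, \R)$: the subcritical bound $|f(t)| \le \eps t + C_\eps t^{\ga-1}$ for $t \ge 0$ coming from \ref{f1}, together with $g \in L^1(\RN) \cap L^\infty(\RN)$ and H\"older's inequality (interpolating $L^2$ and $L^{2^*_s}$ factors of $u$ against powers of $g$), controls every term of $I_a$ and its Fr\'echet derivative.

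Next I would establish the mountain pass geometry uniformly for small $a$. A direct estimate yields
\[
\intRn g\, F_a(u)\, \dx \le C\eps \norm{u}_{\hst}^2 + C_\eps \norm{u}_{\hst}^\ga + a\,C\norm{u}_{\hst},
\]
so choosing $\eps$ and then $\rho$ small produces $\alpha > 0$ with $I_a(u) \ge \alpha$ whenever $\norm{u}_{\hst} = \rho$, provided $a$ is small enough. For the descent, fix any nontrivial $u_0 \in \C_c^\infty(\RN)$ with $u_0 \ge 0$; \ref{f1} forces $F(t)/t^2 \to \infty$, hence dominated convergence gives $I_a(tu_0) \to -\infty$ as $t \to \infty$, and one can pick a single $e = t_\ast u_0$ with $I_a(e) < 0$ for all sufficiently small $a$. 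The mountain pass level $c_a$ then lies in $[\alpha, C_0]$ uniformly for $a$ small.

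The principal obstacle is extracting a \emph{bounded} Cerami sequence $(u_n)$ at level $c_a$ in the absence of an Ambrosetti--Rabinowitz condition. Here \ref{f2} supplies the substitute: it makes $Q(t) := \tfrac{1}{2}f(t)t - F(t)$ bounded below and nondecreasing for $t > R$. Combining this with the identity
\[
I_a(u_n) - \tfrac{1}{2}\langle I_a'(u_n), u_n\rangle = \intRn g\Bigl(\tfrac{1}{2}f_a(u_n)u_n - F_a(u_n)\Bigr)\, \dx
\]
and arguing by contradiction: assume $\norm{u_n}_{\hst} \to \infty$ and set $v_n := u_n/\norm{u_n}_{\hst}$. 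Up to a subsequence $v_n \wra v$ in $\hst$. If $v \not\equiv 0$ the superquadratic growth of $F$ on $\{v>0\}$ (via \ref{f1}) forces the right-hand side to blow up, contradicting $c_a < \infty$; if $v \equiv 0$, a Schechter--Zou-type evaluation of $I_a$ at $t_n u_n/\norm{u_n}_{\hst}$ for an appropriate $t_n$ yields a value larger than $c_a$, again a contradiction. Hence $(u_n)$ is bounded in $\hst$.

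Finally, boundedness gives $u_n \wra u_a$ in $\hst$; the integrability of $g$ (absorbing the subcritical growth of $f$) provides compactness of the nonlinear term against test functions $\phi \in \C_c^\infty(\RN)$, so passage to the limit in $\langle I_a'(u_n), \phi\rangle \to 0$ identifies $u_a$ as a weak solution of \eqref{SP}. Nontriviality follows from $I_a(u_a) = c_a \ge \alpha > 0$. For the uniform estimate, since the mountain pass data $(\rho, \alpha, e)$ are common to all small $a$, one has $c_a \le C$ uniformly; repeating the boundedness analysis of the previous paragraph for the family $(u_a)$ itself --- now exploiting $I_a'(u_a) = 0$ in place of a Cerami limit together with $I_a(u_a) = c_a \le C$ --- then delivers $\norm{u_a}_{\hst} \le C$ for every $a \in (0, a_2)$.
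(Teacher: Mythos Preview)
Your outline matches the paper's proof closely: the mountain pass geometry is verified the same way, and the Cerami-type boundedness argument (superlinearity to exclude a nontrivial positive part of the rescaled limit, then a Schechter--Zou evaluation of $I_a(t_n v_n)$ exploiting \ref{f2}) is exactly the machinery the paper deploys in Proposition~\ref{I_a properties}. The uniform bound on $c_a$ and the rerun of the boundedness argument for the family $(u_a)_{a\to 0}$ are also how the paper proceeds.

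Two points need tightening. First, your dichotomy should read $v^+\not\equiv 0$ versus $v^+\equiv 0$, not $v\not\equiv 0$ versus $v\equiv 0$: if $v\le 0$ with $v\not\equiv 0$, the set $\{v>0\}$ is empty and your superquadratic-growth argument is vacuous, while the Schechter--Zou branch still applies (since $F_a(tv)=-atv$ for $v\le 0$ and hence $I_a(tv_n)\to \tfrac{t^2}{2}+at\int g|v|$ is still large). This is precisely how the paper splits the cases. Second, to assert $I_a(u_a)=c_a$ (and hence nontriviality) you need strong convergence $u_n\to u_a$ in $\hst$, not merely $u_n\rightharpoonup u_a$; passing to the limit in $\langle I_a'(u_n),\phi\rangle$ for $\phi\in\C_c^\infty(\RN)$ only identifies $u_a$ as a critical point. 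The paper closes this by testing with $u_n-u_a$ and invoking compactness of $N_a'$ (Proposition~\ref{compact map}) to obtain $[u_n-u_a]_{s,2}\to 0$; your sketch should include this step.
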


From the above theorem, observe that the mountain pass solutions of \eqref{SP} are uniformly bounded in $L^{2^*_s}(\RN)$. 
In the following theorem, we prove a Brezis-Kato type regularity result which says the uniform boundedness of the mountain pass solutions in $L^r(\RN)$ with $r \ge 2^*_s$, under an additional growth assumption on $f$ near infinity:
 
\begin{enumerate}[label={($\bf \tilde{f}{\arabic*}$)}]
\setcounter{enumi}{0}
    \item \label{f1.1} $\displaystyle \lim_{t \rightarrow \infty}\frac{f(t)}{t^{2^*_s-1}} =0$.
\end{enumerate}

\begin{theorem}\label{regularity}
   Let $s \in (0,1)$ and $N > 2s$. Let $f,g,a_2$ be as given in Theorem~\ref{existence}. In addition, assume that $f$ satisfies {\rm \ref{f1.1}}. Then for $r \in [2^*_s, \infty]$ and $a \in (0, a_2)$, $u_a \in L^r(\RN) \cap \C(\RN)$. Moreover, there exists $C(r,N,s,f,g)>0$ such that $\norm{u_a}_{L^r(\RN)} \le C$, for all $a \in (0, a_2).$ 
\end{theorem}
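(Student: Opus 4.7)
The strategy is a fractional Brezis--Kato iteration bootstrapping the uniform $L^{2^*_s}(\RN)$ bound on $(u_a)$ afforded by Theorem~\ref{existence} together with the embedding $\hst \embd L^{2^*_s}(\RN)$. Set $M_0 := \sup_{a \in (0,a_2)} \norm{u_a}_{L^{2^*_s}(\RN)} < \infty$. The crucial ingredient is the strictly subcritical growth \ref{f1.1}, which yields, for every $\eps > 0$, a constant $C_\eps > 0$ with
\begin{align*}
|f_a(t)| \leq a + C_\eps + \eps |t|^{2^*_s - 1}, \quad t \in \R.
\end{align*}
This reserve (absent under \ref{f1} alone) is precisely what makes the critical piece of the right-hand side of \eqref{SP} absorbable on the left.

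For the iterative step, fix $\beta \geq 0$ and $L \geq 1$, set $w_L := \min\{|u_a|, L\}$, and test the weak formulation of \eqref{SP} with $\phi := u_a w_L^{2\beta} \in \hst$. The pointwise Cauchy--Schwarz inequality
\begin{align*}
(A-B)\bigl(F(A) - F(B)\bigr) \geq \bigl(G(A) - G(B)\bigr)^2, \quad F(t) := t\min(|t|,L)^{2\beta},\; G' := \sqrt{F'},
\end{align*}
combined with the fractional Sobolev embedding gives, after comparing $G(u_a)$ with $u_a w_L^\beta$,
\begin{align*}
c_\beta\, S\, \norm{u_a w_L^\beta}_{L^{2^*_s}(\RN)}^2 \leq \intRn g(x)\, f_a(u_a)\, u_a\, w_L^{2\beta}\,\dx,
\end{align*}
for an explicit $c_\beta > 0$ and $S$ the Sobolev constant. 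Inserting the splitting above, the critical contribution is estimated by H\"older with conjugate exponents $\tfrac{2^*_s}{2^*_s-2}$ and $\tfrac{2^*_s}{2}$,
\begin{align*}
\eps \intRn g\, |u_a|^{2^*_s - 2} (u_a w_L^\beta)^2\,\dx \leq \eps \norm{g}_{L^\infty(\RN)}\, M_0^{2^*_s - 2}\, \norm{u_a w_L^\beta}_{L^{2^*_s}(\RN)}^2,
\end{align*}
and is absorbed on the left once $\eps$ is fixed small in terms of $M_0, c_\beta, S, \norm{g}_{L^\infty(\RN)}$. The remaining subcritical contribution $(a + C_\eps)\intRn g\,|u_a|\,w_L^{2\beta}\,\dx$ is controlled by a H\"older argument in terms of an $L^{p_n}$-norm of $u_a$ available at the previous stage, using $g \in L^1(\RN)\cap L^\infty(\RN) \subset L^r(\RN)$ for every $r \in [1,\infty]$. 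Passing $L \to \infty$ by Fatou produces $u_a \in L^{2^*_s(\beta+1)}(\RN)$ with a bound uniform in $a \in (0,a_2)$.

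Choosing $2(\beta_n+1) = p_n$ gives the recursion $p_{n+1} = (2^*_s/2)\,p_n$ starting from $p_0 = 2^*_s$, so the exponents grow geometrically to infinity and every $r \in [2^*_s,\infty)$ is attained with a uniform bound. For the $L^\infty$ endpoint, once $p_n > N/(2s)$ the right-hand side $g f_a(u_a)$ lies in $L^p(\RN)$ for $p$ large, so the Riesz potential representation of $(-\De)^{-s}$ delivers $\norm{u_a}_{L^\infty(\RN)} \leq C$ uniformly; alternatively, a Moser iteration with carefully telescoped constants reaches the same conclusion. Continuity $u_a \in \C(\RN)$ then follows from the standard interior H\"older regularity theory for the fractional Laplacian applied to the bounded datum $g f_a(u_a) \in L^\infty(\RN) \cap L^1(\RN)$. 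The central technical difficulty is organising the iteration so that every absorbing constant is uniform in $a \in (0,a_2)$; this is exactly what the subcritical cushion in \ref{f1.1} supplies, whereas the weaker hypothesis \ref{f1} would leave a critical-strength remainder that cannot be soaked up by the left-hand side.
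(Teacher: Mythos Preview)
Your proposal is correct and follows essentially the same Moser/Brezis--Kato iteration as the paper: truncated test functions, absorption of the critical piece via the subcritical cushion \ref{f1.1}, geometric bootstrap of the exponents, and then H\"older regularity for continuity. The only cosmetic difference is that the paper performs the $\eps$-absorption just once (at the starting exponent $r_1=2^*_s+1$) and thereafter telescopes a non-absorbing iteration with the cruder bound \eqref{growth 2} explicitly to $L^\infty$, whereas you absorb at each step and reach the $L^\infty$ endpoint via the Riesz potential; both variants are valid.
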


Next, we state the positivity of the solutions to \eqref{SP}. To state the result, we invoke further hypothesis on $f$ and $g$:
\begin{enumerate}[label={($\bf f{\arabic*}$)}]
\setcounter{enumi}{2}
    \item \label{f3} $f$ is locally Lipschitz, 
\end{enumerate}
 \vspace{- 0.2 cm}
\begin{enumerate}[label={($\bf g{\arabic*}$)}]
\setcounter{enumi}{0}
    \item \label{g1} $|x|^{N-2s} \intRn \frac{g(y)}{|x-y|^{N-2s}} \, \dy \le C(g)$, where $x \in \RN \setminus \{0\}$ and  $C(g)>0$.
\end{enumerate}
 
\begin{theorem}\label{result}
 Let $s \in (0,1)$ and $N > 2s$. Let $f,g,a_2$ be as given in Theorem~\ref{regularity}. Then the following hold: 
 \begin{enumerate}
     \item[{\rm (i)}] There exists $a_3 \in (0, a_2)$ such that for every $a \in (0, a_3)$, $u_a \ge 0$ on $\RN$. 
     \item[{\rm (ii)}] In addition, if $f$ satisfies {\rm \ref{f3}} and $g$ satisfies {\rm \ref{g1}}, then $u_a>0$ on $\RN$.
 \end{enumerate}
 \end{theorem}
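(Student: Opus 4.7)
For part (i), the plan is first to quantify the smallness of $u_a^-$ and then to exclude that it is nonzero for small $a$. Testing \eqref{SP} against $u_a^- \in \hst$ and using the pointwise inequality
\[
\bigl(u_a(x)-u_a(y)\bigr)\bigl(u_a^-(x)-u_a^-(y)\bigr) \;\le\; -\bigl(u_a^-(x)-u_a^-(y)\bigr)^2,
\]
together with $f_a(u_a)=-a$ on $\{u_a\le 0\}$, yields $[u_a^-]_{s,2}^2\le a\int_{\RN} g\,u_a^-\,\dx$. The Sobolev embedding $\hst \embd L^{2^*_s}(\RN)$ and Hölder's inequality then give $\|u_a^-\|_{L^{2^*_s}(\RN)}\le C a$ with $C$ independent of $a\in(0,a_2)$. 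Combining this smallness with the uniform $L^\infty$-bound of Theorem~\ref{regularity} and a Moser/Brezis--Kato-type iteration applied to $u_a^-$ upgrades the estimate to $\|u_a^-\|_{L^\infty(\RN)}\to 0$ as $a\to 0$. Identical vanishing for small $a$ is then obtained by contradiction: along any $a_n\to 0$ with $u_{a_n}^-\not\equiv 0$, uniform $\hst$, $L^\infty$ and interior Hölder estimates (the latter from fractional regularity applied to the bounded right-hand side $gf_a(u_a)$) extract a locally uniformly convergent subsequence $u_{a_n}\to u_0\ge 0$ solving the $a=0$ problem. The uniform mountain-pass lower bound $\|u_a\|_{\hst}\ge c_0>0$ inherited from Theorem~\ref{existence} forces $u_0\not\equiv 0$, and the strong minimum principle for $(-\De)^s$ (applicable since, by \ref{f1}, $f(u_0)/u_0$ is bounded on $[0,\|u_0\|_\infty]$) gives $u_0>0$ everywhere, contradicting $u_{a_n}^-\not\equiv 0$ for $n$ large.

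For part (ii), assume additionally \ref{f3} and \ref{g1}. Since $u_a\in\hst$ and $N>2s$, the Riesz potential representation
\[
u_a(x)=C_{N,s}\!\int_{\RN}\frac{g(y)\bigl(f(u_a(y))-a\bigr)}{|x-y|^{N-2s}}\,\dy
\]
is available. The local Lipschitz hypothesis together with $f(0)=0$ yields $f(u_a)\ge -Lu_a$ on $[0,\|u_a\|_\infty]$, so \eqref{SP} can be recast as $(-\De)^s u_a+Lg(x)u_a\ge -ag(x)$ in $\RN$. Hypothesis \ref{g1} then controls the potentially negative $-ag$ contribution via the Riesz potential $W(x):=\int g(y)|x-y|^{2s-N}\dy$, which satisfies $W\in L^\infty$ (from $g\in L^1\cap L^\infty$) and $|x|^{N-2s}W(x)\le C(g)C_{N,s}$. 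A comparison/barrier argument using $W$ together with the fractional strong minimum principle, and the nontriviality $u_a\not\equiv 0$ delivered by part (i), then yields $u_a(x)>0$ for every $x\in\RN$.

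\textbf{Main obstacle.} The most delicate step is bridging the quantitative $L^{2^*_s}$-smallness of $u_a^-$ to its identical vanishing: the semipositone right-hand side is strictly negative where $u_a\le 0$, so classical strong maximum principles do not apply directly. The compactness/contradiction strategy above therefore relies crucially on (a) uniform interior Hölder regularity coming from the $L^\infty$-bound of Theorem~\ref{regularity} (for equicontinuity and local uniform convergence), (b) the mountain-pass lower bound on $\|u_a\|_{\hst}$ (for nontriviality of the limit), and (c) the strong minimum principle applied to the $a=0$ limit problem. For (ii), the analogous delicacy is taming the $-ag$ contribution through the Riesz kernel, which is precisely where \ref{g1} becomes indispensable.
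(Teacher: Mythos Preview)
Your plan for part (i) contains a genuine gap in the contradiction step. You extract a subsequence $u_{a_n}\to u_0$ converging only \emph{locally} uniformly (via interior H\"older estimates), establish $u_0>0$ on $\RN$, and then claim this contradicts $u_{a_n}^-\not\equiv 0$. But local uniform convergence to a strictly positive limit does not exclude $u_{a_n}^-\not\equiv 0$: the set $\{u_{a_n}<0\}$ may escape to infinity as $n\to\infty$. Your preliminary steps do show $\|u_{a_n}^-\|_{L^\infty(\RN)}\to 0$, but that is smallness, not vanishing. The same loss of compactness undermines your claim that $u_0\not\equiv 0$: with only local convergence, the mass guaranteed by the uniform lower bound on $u_{a_n}$ could also drift to infinity, leaving $u_0\equiv 0$. (Incidentally, that lower bound is Proposition~\ref{lowerbound}, not Theorem~\ref{existence}.)

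The paper closes this gap by upgrading to \emph{global} $L^\infty$ convergence. Both $u_{a_n}$ and the limit $\tilde u$ admit the Riesz potential representation
\[
u(x)=C(N,s)\int_{\RN}\frac{g(y)\,f_{a}(u(y))}{|x-y|^{N-2s}}\,\dy,
\]
and splitting this integral over $B_1(x)$ and $\RN\setminus B_1(x)$, one applies dominated convergence (using the uniform $L^\infty$ bound from Theorem~\ref{regularity}) to obtain $\|u_{a_n}-\tilde u\|_{L^\infty(\RN)}\to 0$. Global uniform convergence, together with the lower $L^\infty$ bound from Proposition~\ref{lowerbound} and the strong maximum principle applied to the limit problem, then yields $u_{a_n}\ge 0$ for large $n$. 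Your testing identity $[u_a^-]_{s,2}^2\le a\int_{\RN} g\,u_a^-$ is correct and pleasant, but it is not used in the paper and does not by itself reach the conclusion.

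For part (ii), your plan is closer to a sketch than an argument: rewriting the equation as $(-\De)^s u_a+Lg\,u_a\ge -ag$ and invoking ``a comparison/barrier argument using $W$'' leaves the actual barrier construction unspecified, and the standard fractional strong minimum principle does not apply directly when the right-hand side can be negative. The paper instead continues with the Riesz representation and proves, using \ref{f3} and \ref{g1}, the weighted convergence
\[
\sup_{x\neq 0}\,|x|^{N-2s}\,|u_{a_n}(x)-\tilde u(x)|\to 0,
\]
while separately showing $\liminf_{|x|\to\infty}|x|^{N-2s}\tilde u(x)>0$. These two facts pin down strict positivity of $u_{a_n}$ both outside a large ball and (by continuity and $\tilde u>0$) on any fixed compact set.
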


Let us briefly discuss our approach to prove the above theorems. The existence of mountain pass solution is based on variational methods. To establish the Brezis-Kato type regularity result for \eqref{SP}, we use the Moser iteration technique and the uniform boundedness of the mountain pass solutions in $\hst$ along with the growth assumption \ref{f1.1}. With this regularity result and using the Riesz representation for the operator $(-\Delta)^s$ (\cite[Theorem 5]{St2019}), we show that a sequence of mountain pass solutions uniformly converges to a positive function in $\C(\RN)$ near $a=0$. In the end, we conclude the positivity of the solutions by using the properties \ref{f3} and \ref{g1}. At this point, it is clear that the range of `$a$', the growth of $f$ near infinity, and the behaviour of the weight function $g$ are essential for the existence of positive solutions. One example of $f,g$ satisfying all these properties is demonstrated in Example \ref{example1}.

We organize the rest of this paper as follows. In Section 2, we obtain some embeddings of $\hst$ and set up a variational framework  associated with \eqref{SP}. Section 3 contains the proof of the existence and regularity of the mountain pass solutions to \eqref{SP}. We establish the positivity of the solution in Section 4.

\section{Embeddings of $\hst$ and the variational settings} 

In the first part of this section, we discuss compact embeddings of $\hst$ into specific Lebesgue spaces and weighted Lebesgue spaces. Using these embeddings, in the second part, we prove qualitative properties of the energy functional associated with \eqref{SP}. In the rest of this paper, we denote $C$ as a generic positive constant, and denote the norm $\norm{\cdot}_{L^p(\RN)}$ by $\norm{\cdot}_p$.  

\subsection{Embeddings of $\hst$} 
Recall that, the homogeneous fractional Sobolev space $\hst$ is the closure of $\C_c^1(\RN)$ with respect to $[\cdot]_{s,2} + \norm{\cdot}_{2^*_s}$. 
In view of \cite[Theorem 1.1]{DV2015}, $\hst$ has the following representation: 
$$ \hst := \left\{ u : \RN \rightarrow \R \text{ is measurable} : [u]_{s,2} + \norm{u}_{2^*_s} < \infty \right\}. $$
Henceforth, $\hst \hookrightarrow L^{2^*_s}(\RN)$. Moreover, by \cite[Theorem 2.2.1]{BV2016} and using the density of $\\C_c^1(\RN)$, we get 
\begin{align}\label{Sobolev Embedding}
    \norm{u}_{2^*_s} \le C(N,s) [u]_{s,2}, \; \forall \, u \in \hst.
\end{align}
The above inequality infers that $[\cdot]_{s,2}$ is an equivalent norm in $\hst$, i.e., there exists $C_{1}$ depending on $N,s$ such that $[u]_{s,2} \le \norm{u}_{\hst} \le C_{1} [u]_{s,2}$ holds for all $u \in \hst$.
In the following proposition, we prove that $\hst$ is compactly embedded into spaces of locally integrable functions. 

\begin{proposition}\label{compact embeddings 1}
Let $N>2s$. Then $\hst \hookrightarrow L_{loc}^q(\RN)$ compactly for every $q \in (1, 2^*_s)$.
\end{proposition}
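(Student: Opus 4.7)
The plan is to reduce the local compactness on $\RN$ to the classical fractional Rellich--Kondrachov compactness theorem on bounded Lipschitz domains, combined with a standard diagonal extraction. The homogeneous space $\hst$ is equipped only with the seminorm $[\cdot]_{s,2}$ plus the critical norm $\norm{\cdot}_{2^*_s}$, so the key preliminary step is to show that on any fixed ball these two pieces together control the full (inhomogeneous) $H^s$ norm.

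First, I would fix $R>0$, write $B_R := B_R(0)$, and check that the restriction map $\hst \to H^s(B_R)$ is continuous. For $u \in \hst$, the Gagliardo seminorm on $B_R \times B_R$ is trivially dominated by the one on $\RN \times \RN$, giving $[u]_{H^s(B_R)} \le [u]_{s,2}$. For the $L^2(B_R)$ piece, since $B_R$ has finite measure and $2 < 2^*_s$, H\"older's inequality gives
\begin{align*}
\norm{u}_{L^2(B_R)} \le |B_R|^{\frac{1}{2}-\frac{1}{2^*_s}} \norm{u}_{L^{2^*_s}(B_R)} \le |B_R|^{\frac{1}{2}-\frac{1}{2^*_s}} \norm{u}_{2^*_s}.
\end{align*}
Combined with \eqref{Sobolev Embedding} this yields $\norm{u}_{H^s(B_R)} \le C(R,N,s)\,[u]_{s,2}$. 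This is precisely where the built-in $L^{2^*_s}$-control in the definition of $\hst$ is used: a pure seminorm bound would not suffice to localize to $L^2$.

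Second, I would take any bounded sequence $\{u_n\} \subset \hst$. By the previous step, $\{u_n|_{B_R}\}$ is bounded in $H^s(B_R)$. Since $B_R$ is a bounded Lipschitz domain, the fractional Rellich--Kondrachov theorem (see e.g.\ \cite[Theorem 7.1]{DV2015}-style results, which hold for any $q \in [1,2^*_s)$) produces a subsequence converging strongly in $L^q(B_R)$. Applying this on the exhausting family $B_1 \subset B_2 \subset \cdots$ and then extracting a diagonal subsequence $\{u_{n_k}\}$ gives convergence in $L^q(B_m)$ for every $m \in \N$. Since any compact $K \subset \RN$ is contained in some $B_m$, strong convergence in $L^q(K)$ follows, and compactness of $\hst \hookrightarrow L^q_{loc}(\RN)$ is established for every $q \in (1,2^*_s)$.

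The argument is essentially routine; the only mildly subtle point is the localization of $L^2$-information, where the critical integrability baked into $\norm{\cdot}_{\hst}$ is what makes the reduction to the inhomogeneous space $H^s(B_R)$ work. No Palais--Smale type machinery or tightness argument is needed, because compactness is required only locally, not on all of $\RN$.
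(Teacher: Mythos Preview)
Your argument is correct and does prove the proposition, but it takes a different route from the paper. The paper does not pass through the inhomogeneous space $H^s(B_R)$ at all; instead it works directly on $\RN$ via the Riesz--Fr\'echet--Kolmogorov criterion. Concretely, for a bounded sequence $(u_n)$ in $\hst$ the paper invokes the translation estimate
\[
\sup_{|h|>0}\int_{\RN}\frac{(u_n(x+h)-u_n(x))^2}{|h|^{2s}}\,\dx \le C(N)\,[u_n]_{s,2}^2
\]
(from \cite[Lemma~A.1]{BLP14}) to verify equicontinuity in $L^2$, obtains compactness in $L^2_{loc}$, and then handles $q\in(1,2)$ by the trivial inclusion $L^2_{loc}\hookrightarrow L^q_{loc}$ and $q\in(2,2^*_s)$ by interpolating between $L^2$ and $L^{2^*_s}$ and applying Riesz--Fr\'echet--Kolmogorov once more. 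Your approach is arguably more streamlined if one is willing to quote the fractional Rellich--Kondrachov theorem on bounded Lipschitz domains as a black box (note, though, that \cite{DV2015} in the paper's bibliography is a density result, not the compact embedding; the standard reference is the Di~Nezza--Palatucci--Valdinoci survey). The paper's approach is more self-contained in that it reduces everything to the elementary Riesz--Fr\'echet--Kolmogorov criterion plus a single seminorm inequality, at the cost of treating the ranges $q<2$ and $q>2$ separately.
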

\begin{proof}
Combining the embeddings $\hst \hookrightarrow L^{2^*_s}(\RN)$ and $L_{loc}^{2^*_s}(\RN) \hookrightarrow L_{loc}^q(\RN)$ with $q \in (1,2^*_s)$, it is evident that  $\hst \hookrightarrow L_{loc}^q(\RN)$ for $q \in (1, 2^*_s)$. First, we show that $\hst$ is compactly embedded into $L_{loc}^2(\RN)$. Let $(u_n)$ be a bounded sequence in $\hst$, and $K \subset \RN$ be a compact set. Then there exists $M_1>0$ such that $\norm{u_n}_{L^2(K)} \le C \norm{u_n}_{\hst} \le M_1$ for every $n \in \N$. Consequently, the sequence $(u_n \big|_K)$ is bounded in $L^2(K)$. Further, using \cite[Lemma A.1]{BLP14} for every $n \in \N$, we have
\begin{align}\label{estimate1}
    \sup_{\abs{h}>0} \intRn \frac{(u_n(x+h)-u_n(x))^2}{\abs{h}^{2s}} \, \dx \le C(N) [u_n]_{s,2}^2.
\end{align}
The boundedness of $(u_n)$ in $\hst$ and \eqref{estimate1} confirm that for all $n \in \N$, $\int_{\RN} (u_n(x+h)-u_n(x))^2 \, dx \ra 0$ as $\abs{h} \ra 0$. Now by applying the Riesz-Fr\'{e}chet-Kolmogorov compactness theorem on $(u_n \big|_K) \subset L^2(K)$, we conclude $(u_n \big|_K)$ is relatively compact. Hence, it has a convergent subsequence in $L^2(K)$. Therefore, $\hst$ is compactly embedded into $L_{loc}^2(\RN)$. For $q \in (1,2)$, using $L_{loc}^2(\RN) \hookrightarrow L_{loc}^q(\RN)$ we directly get the compact embeddings of $\hst$ into $L_{loc}^q(\RN)$. Next, we consider $q \in (2,2^*_s)$. In this case, we express $q=2t+ (1-t)2^*_s$, where $t = \frac{2^*_s-q}{2^*_s-2} \in (0,1)$. Using $\hst \hookrightarrow L_{loc}^q(\RN)$, the sequence $(u_n \big|_K)$ is bounded in $L^q(K)$. Applying the H\"{o}lder's inequality with the conjugate pair $(\frac{1}{t}, \frac{1}{1-t})$ and \eqref{estimate1} we obtain the following estimate for every $\abs{h}>0$ and $n \in \N$:
\begin{align*}
    \intRn \abs{u_n(x+h)-u_n(x)}^q \, \dx & \le \left( \intRn (u_n(x+h)-u_n(x))^2 \, \dx \right)^t \left( \intRn \abs{u_n(x+h)-u_n(x)}^{2^*_s} \, \dx\right)^{1-t} \\
    & \le  C(N) \left( \abs{h}^s  [u_n]_{s,2} \right)^{2t}  \norm{u_n}_{2^*_s}^{2^*_s(1-t)} \le  C(N)  \abs{h}^{2st} \norm{u_n}_{\hst}^q.  
\end{align*}
Again the boundedness of $(u_n)$ in $\hst$, and the Riesz-Fr\'{e}chet-Kolmogorov compactness theorem confirm a convergent subsequence of $(u_n \big|_K)$ in $L^q(K)$. Therefore, $\hst$ is compactly embedded into $L_{loc}^q(\RN)$ for $q \in (2,2^*_s)$ as well. This completes the proof.
\end{proof}

We now prove the compact embeddings of $\hst$ into weighted Lebesgue spaces.

\begin{proposition}\label{compact embeddings 2}
Let $N>2s$ and  $q \in [2,2^*_s)$. Let $p$ be the conjugate exponent of $\frac{2^*_s}{q}$, and $g \in  L^p(\RN)$. Then the embedding $\hst \hookrightarrow L^q(\RN,|g|)$ is compact.
\end{proposition}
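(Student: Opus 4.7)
The continuity of the embedding is immediate from H\"older's inequality with the conjugate pair $(\frac{2^*_s}{q}, p)$: for any $u \in \hst$,
\begin{align*}
\int_{\RN} \abs{u}^q \abs{g} \, \dx \le \left( \int_{\RN} \abs{u}^{2^*_s} \, \dx \right)^{q/2^*_s} \norm{g}_p = \norm{u}_{2^*_s}^q \, \norm{g}_p,
\end{align*}
which combined with \eqref{Sobolev Embedding} yields $\norm{u}_{L^q(\RN,\abs{g})} \le C \norm{u}_{\hst}$. The bulk of the work is to upgrade this to compactness.

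For the compactness part, the plan is the following truncation-splitting argument. Let $(u_n)$ be a bounded sequence in $\hst$. By reflexivity and the Sobolev inequality \eqref{Sobolev Embedding}, there exist a subsequence (still denoted $(u_n)$) and $u \in \hst$ such that $u_n \wra u$ in $\hst$ and a.e.; replacing $u_n$ by $u_n - u$, I may assume the weak limit is $0$. The goal is then to show $u_n \to 0$ in $L^q(\RN, \abs{g})$. For each integer $m \ge 1$, introduce the truncation
\begin{align*}
g_m(x) := \mathrm{sgn}(g(x)) \, \min(\abs{g(x)}, m) \, \chi_{B_m(0)}(x),
\end{align*}
which is bounded with compact support. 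Since $\abs{g_m} \le \abs{g}$ and $g_m \to g$ pointwise a.e., dominated convergence in $L^p(\RN)$ gives $\norm{g - g_m}_p \to 0$ as $m \to \infty$.

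The key splitting is
\begin{align*}
\int_{\RN} \abs{u_n}^q \abs{g} \, \dx \le \int_{\RN} \abs{u_n}^q \abs{g_m} \, \dx + \int_{\RN} \abs{u_n}^q \, \abs{\,\abs{g} - \abs{g_m}\,} \, \dx.
\end{align*}
For the second term, H\"older (again with exponents $\frac{2^*_s}{q}$ and $p$) together with the uniform bound on $\norm{u_n}_{2^*_s}$ yields an estimate by $C \norm{g-g_m}_p$, which can be made smaller than any prescribed $\eps>0$ by choosing $m$ large, uniformly in $n$. For the first term, observe that $\abs{g_m} \le m \chi_{B_m(0)}$, so
\begin{align*}
\int_{\RN} \abs{u_n}^q \abs{g_m} \, \dx \le m \int_{B_m(0)} \abs{u_n}^q \, \dx,
\end{align*}
and since $q \in [2, 2^*_s) \subset (1, 2^*_s)$, Proposition \ref{compact embeddings 1} provides (after passing to a further subsequence) that $u_n \to 0$ in $L^q(B_m(0))$ for the \emph{fixed} $m$ just chosen. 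Combining the two estimates gives $\limsup_n \int_{\RN} \abs{u_n}^q \abs{g} \, \dx \le C\eps$, and since $\eps$ was arbitrary, $u_n \to 0$ in $L^q(\RN, \abs{g})$, establishing compactness.

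I do not anticipate a serious obstacle here: once the correct truncation of $g$ is introduced, the splitting cleanly separates the tail/singularity of $g$ (handled by the absolute continuity of the $L^p$-norm) from the local behaviour (handled by Proposition \ref{compact embeddings 1}). The only mildly delicate point is the standard diagonal extraction of subsequences needed to make both pieces small simultaneously, but this is routine.
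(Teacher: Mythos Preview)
Your proof is correct and follows essentially the same approach as the paper's: split $g$ into a bounded, compactly supported piece and a small $L^p$ remainder, control the remainder via H\"older with the conjugate pair $(\tfrac{2^*_s}{q},p)$, and handle the compactly supported piece using the local compactness of Proposition~\ref{compact embeddings 1}. The only cosmetic differences are that the paper invokes density of $\C_c(\RN)$ in $L^p(\RN)$ rather than your explicit truncation $g_m$, and that the diagonal/subsequence extraction you flag at the end is in fact unnecessary---once $u_n\rightharpoonup 0$ in $\hst$, the compact local embedding forces $u_n\to 0$ in $L^q(B_m)$ along the full sequence.
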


\begin{proof}
Let $u_n \rightharpoonup u$ in $\hst$. We need to show $u_n \ra u$ in $L^{q}(\RN,|g|)$. Set $L = \sup \{\norm{u_n-u}_{2^*_s}^{q} : n \in \N\}.$ Clearly, $L$ is finite from the boundedness of $(u_n)$ in $\hst$. Let $\ep > 0$ be given. Since $\C_c(\RN)$ is dense in $L^{p}(\RN)$, we choose $g_{\ep} \in \C_c(\RN)$ such that $\norm{g - g_{\ep} }_p < \frac{\ep}{2L}.$ We estimate
\begin{align}\label{cembed2.1}
\intRn |g| |u_n - u|^{q} \leq \intRn |g - g_{\ep}| |u_n - u|^{q} + \intRn |g_{\ep}| |u_n - u|^{q}.
\end{align}
Using the H\"{o}lder's inequality with conjugate pair $(p,\frac{2^*_s}{q})$ we estimate the first integral of the above inequality as  
\begin{align}\label{cembed2.2}
\intRn \abs{g - g_{\ep}} \abs{u_n - u}^q \le \norm{ g - g_{\ep}}_p \norm{u_n-u}_{2^*_s}^q < \frac{\ep}{2}.
\end{align} 
Suppose $K$ is the support of $g_{\ep}$. Using the compact embeddings of $\hst$ into $L^q_{loc}(\RN)$ (Proposition \ref{compact embeddings 1}), there exists $n_1 \in \N$ such that
\begin{align*}
   \intRn |g_{\ep}| |u_n - u|^{q}  = \int_{K} |g_{\ep}| |u_n - u|^{q}  < \frac{\ep}{2} , \; \forall \, n \ge n_1.
\end{align*}
Therefore, from \eqref{cembed2.1} and \eqref{cembed2.2} we obtain $$\intRn |g| |u_n - u|^{q} < \ep, \; \forall \, n \ge n_1.$$ Thus $u_n \ra u$ in $L^q(\RN,|g|)$. 
\end{proof}

\subsection{The variational settings}
For the existence of a solution of \eqref{SP}, this subsection sets up a suitable functional framework. In the following remark, we identify some bounds (upper and lower) for $f_a$ and its primitive $F_a$, defined as $F_a(t) = \int_0^t f_a(\tau) {\rm d}\tau$.     

\begin{remark}\label{bound}
(i) Let $\ep>0$ and $\ga \in (2,2^*_s]$. Using subcritical growth on $f$ and behaviour of $f$ near zero (see \ref{f1}), there exists $t_1(\ep)>0$ such that $f(t) < \ep t,$ for $0<t<t_1$, and $f(t) \le Ct^{\ga-1}$ for $t \ge t_1$, where $C=C(f,t_1(\ep))$. Hence $f(t) \le \ep t + Ct^{\ga-1}$ for $t \in \R^+$, and 
\begin{align}\label{growth 1}
    \abs{f_a(t)} \le \ep |t| + C|t|^{\ga -1} - a \text{ and }  \abs{F_a(t)} \le \ep t^2 + C|t|^{\ga} + a|t| \text{ for } t \in \R,
\end{align}
where $C=C(f,t_1(\ep))$. Again using the subcritical growth on $f$, $f(t) \le C(f) t^{\ga-1}$ for $t > t_2$. The continuity of $f$ infers that $f(t) \le C$ on $[0,t_2]$. Hence for $a \in (0, \tilde{a})$, we get
\begin{align}\label{growth 2}
    \abs{f_a(t)} \le C(1+\abs{t}^{\gamma -1}) \text{ and } \abs{F_a(t)} \le C(\abs{t} + \abs{t}^{\ga}) \text{ for } t \in \R, 
\end{align}
where $C=C(f,t_2, \tilde{a})$. Using \ref{f1.1}, there exists $t_3(\ep)>0$ so that $f(t) \le \ep t^{2^*_s-1}$ for all $t> t_3$. Hence for $a \in (0, \tilde{a})$, we also obtain 
\begin{align}\label{growth 2.1}
    \abs{f_a(t)} \le C + \ep\abs{t}^{2^*_s-1} \text{ for } t \in \R,
\end{align}
where $C=C(\ep, t_3(\ep), \tilde{a})$.

\noi (ii) Let $M>0$. Since $f$ is superlinear (see \ref{f1}), there exists a constant $C=C(M)$ such that $f(t) > Mt-C$, for every $t \in \R^+$. From the superlinearity of $f$, it also follows that $\lim_{t \rightarrow \infty}\frac{F(t)}{t^2}=\infty$, and hence $F(t)>Mt^2-C(M)$ for every $t \in \R^+$.  Accordingly, 
\begin{align}\label{growth 3}
    f_a(t) > Mt-(C+a) \text{ and } F_a(t) = F(t)-at> M t^2-(at+C)  \text{ for } t \in \R^+,
\end{align}
where $C=C(M)$.
\end{remark}

For $g \in L^1(\RN) \cap L^{\infty}(\RN)$ and $a \ge 0$, we consider the following functionals on $\hst$:
\begin{align*}
   N_a(u) = \intRn g F_a(u), \text{ and } I_a(u) = \frac{1}{2} [u]_{s,2}^2 - N_a(u). 
\end{align*}
Using the upper bound of $F_a$ (Remark \ref{bound}) it follows that $N_a$ and $I_a$ are well-defined. Moreover, one can
also verify that $N_a, I_a \in \C^1(\hst, \R)$ and $N_a'(u), I_a'(u)$ for $u \in \hst$ are given by
\begin{align}\label{energy functional}
   N_a'(u)(v) = \intRn gf_a(u)v, \text{ and }  
    I_a'(u)(v) = \iint\limits_{\RN \times \RN}\frac{(u(x)-u(y)(v(x) - v(y))}{|x-y|^{N+2s}}\,\dx\dy -  N_a'(u)(v),
    \end{align}
where $v \in \hst$. Every critical point of $I_a$ corresponds to a solution of \eqref{SP}. In the following proposition, we prove the compactness of $N_a$ and $N_a'$. 

\begin{proposition}\label{compact map}
Let $N>2s$ and $\ga \in (2,2^*_s)$. Let $g \in L^1(\RN) \cap L^{\infty}(\RN)$. Assume that $f$ satisfies {\rm \ref{f1}}. Then the following hold for $a \ge 0$: 
\begin{enumerate}
    \item[{\rm(i)}] The functional $N_a$ is compact on $\hst$. Moreover, if $u_n \rightharpoonup u$ in $\hst$ and $a_n \ra a$ in $\R^+$, then $N_{a_n}(u_n) \ra N_a(u)$.
    \item[{\rm(ii)}] The map $N_a': \hst \ra (\hst)'$ is compact. Moreover, if $u_n \rightharpoonup u$ in $\hst$ and $a_n \ra a$ in $\R^+$, then $N_{a_n}'(u_n)(v) \ra N_{a}'(u)(v)$ for every $v \in \hst$. 
\end{enumerate}
\end{proposition}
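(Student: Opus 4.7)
The plan is to use the compact embeddings $\hst \hookrightarrow L^q(\RN,g)$ for $q \in [2, 2^*_s)$ (Proposition~\ref{compact embeddings 2}, applicable since $g \in L^1(\RN) \cap L^\infty(\RN) \subset L^p(\RN)$ for every $p \ge 1$) and to reduce each assertion to the dominated convergence theorem after passing to a subsequence.

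For Part~(i), let $u_n \rightharpoonup u$ in $\hst$ and $a_n \to a$ in $\R^+$. Proposition~\ref{compact embeddings 2} with $q \in \{2, \gamma\}$ yields $u_n \to u$ strongly in $L^2(\RN,g) \cap L^\gamma(\RN,g)$, so after extracting a subsequence we may assume $u_n \to u$ a.e.\ on $\RN$ and there exist majorants $|u_n| \le h_2$, $|u_n| \le h_\gamma$ a.e.\ with $h_2 \in L^2(\RN,g)$ and $h_\gamma \in L^\gamma(\RN,g)$. The joint continuity of $(a,t) \mapsto F_a(t)$ yields the pointwise convergence $F_{a_n}(u_n) \to F_a(u)$, and \eqref{growth 1} furnishes the integrable dominating function
\begin{align*}
g(x) |F_{a_n}(u_n(x))| \le g(x) \bigl(h_2(x)^2 + C h_\gamma(x)^\gamma + M h_2(x)\bigr), \qquad M := \sup_n a_n,
\end{align*}
where $\int g h_2 \le \|g\|_1^{1/2} (\int g h_2^2)^{1/2}$ by Cauchy--Schwarz. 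Dominated convergence gives $N_{a_n}(u_n) \to N_a(u)$ along the subsequence; applying this to any subsequence shows the full sequence converges.

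For Part~(ii), set $\gamma' = \gamma/(\gamma-1) \in (1,2)$ and apply H\"older's inequality with exponents $(\gamma', \gamma)$:
\begin{align*}
\bigl|N_{a_n}'(u_n)(v) - N_a'(u)(v)\bigr| \le \left(\int_{\RN} g |f_{a_n}(u_n) - f_a(u)|^{\gamma'}\right)^{1/\gamma'} \left(\int_{\RN} g |v|^\gamma\right)^{1/\gamma}.
\end{align*}
Proposition~\ref{compact embeddings 2} bounds the second factor by $C \|v\|_{\hst}$, so the task reduces to showing $\int g |f_{a_n}(u_n) - f_a(u)|^{\gamma'} \to 0$. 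The identity $(\gamma-1)\gamma' = \gamma$ together with \eqref{growth 1} gives $|f_a(t)|^{\gamma'} \le C(|t|^{\gamma'} + |t|^\gamma + a^{\gamma'})$; using the same majorants $h_2, h_\gamma$ from Part~(i) together with the bound $\int g h_2^{\gamma'} \le \|g\|_1^{(2-\gamma')/2} (\int g h_2^2)^{\gamma'/2}$ (H\"older with conjugate pair $(2/(2-\gamma'), 2/\gamma')$, since $\gamma' < 2$) one obtains an integrable dominant. Joint continuity of $f_a$ in $(a,t)$ gives a.e.\ convergence, dominated convergence closes the estimate, and the subsequence principle upgrades it to the full sequence.

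The main technical obstacle is the choice of H\"older exponent in Part~(ii): $\gamma'$ must satisfy simultaneously the matching condition $(\gamma-1)\gamma' = \gamma$ (so that the subcritical growth of $|f_a|^{\gamma'}$ at infinity is absorbed by the $L^\gamma(\RN,g)$ bound) and $\gamma' < 2$ (so that the $L^2(\RN,g)$ strong convergence can be interpolated into an integrable dominant for $g h_2^{\gamma'}$). Coordinating this with the joint dependence on $(a_n, u_n)$ is the only subtle step, everything else being a clean application of dominated convergence.
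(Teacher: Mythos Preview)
Your argument is correct and takes a genuinely different route from the paper's. The paper proves Part~(i) by a density/splitting trick: it approximates $g$ by a compactly supported $g_\epsilon$, controls the tail $\int |g-g_\epsilon||F_a(u_n)-F_a(u)|$ by H\"older's inequality against the $L^{2^*_s}$ bound, and on the compact support of $g_\epsilon$ invokes the local compact embedding $\hst \hookrightarrow L^\gamma_{loc}(\RN)$ together with generalized dominated convergence. The dependence on $a_n$ is then handled separately via the identity $F_{a_n}(t)-F_a(t)=(a-a_n)t$. Part~(ii) is dismissed with ``similarly''. Your approach instead feeds the weighted compact embedding of Proposition~\ref{compact embeddings 2} directly into the problem: strong convergence in $L^2(\RN,g)\cap L^\gamma(\RN,g)$ yields a.e.\ convergence plus $L^2(g)$ and $L^\gamma(g)$ majorants along a subsequence, and a single application of dominated convergence (with the joint $(a,t)$-continuity) handles both $u_n\rightharpoonup u$ and $a_n\to a$ at once. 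This is shorter and avoids re-running the density argument already hidden inside Proposition~\ref{compact embeddings 2}. Your treatment of Part~(ii) is also more informative than the paper's: the H\"older pairing $(\gamma',\gamma)$ together with the matching $(\gamma-1)\gamma'=\gamma$ gives an explicit operator-norm estimate $\|N_{a_n}'(u_n)-N_a'(u)\|_{(\hst)'}\le C\bigl(\int g|f_{a_n}(u_n)-f_a(u)|^{\gamma'}\bigr)^{1/\gamma'}$, so strong convergence in the dual (hence compactness of $N_a'$, not merely pointwise convergence of $N_a'(u_n)(v)$) falls out immediately. The one point worth making explicit is that the two majorants $h_2,h_\gamma$ are obtained by successive subsequence extractions, so that a single subsequence carries both; you use this implicitly but it is the only place a reader might pause.
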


\begin{proof}
(i) Let $u_n \rightharpoonup u$ in $\hst$. We show that $N_a(u_n) \ra N_a(u)$. The idea of the proof is similar to Proposition \ref{compact embeddings 2}. 
Set 
$L = \sup \left\{\norm{u_n}_{2^*_s}^{\ga} + \norm{u}_{2^*_s}^{\ga} + \norm{u_n}_{2^*_s} + \norm{u}_{2^*_s} : n \in \N \right\}$, which is finite since $(u_n)$ is bounded in $\hst$.   
Let $\ep > 0$ be given. Let $p$ be the conjugate exponent of $\frac{2^*_s}{\ga}$. 
 Using the density of $\C_c(\RN)$ into $L^{p}(\RN)$ and $L^{\frac{N}{2s}}(\RN)$, we take $g_\ep \in C_c(\RN)$ satisfying $$\abs{g_\ep}< \abs{g} \, \text{ and } \, \norm{g-g_{\ep}}_p + \norm{g-g_{\ep}}^{\frac{1}{2}}_{\frac{N}{2s}}<\frac{\ep}{L}.$$ For $a \ge 0$, we write
\begin{align}\label{compact 1}
    \abs{N_a(u_n)-N_a(u)} \le \intRn \abs{g-g_{\ep}}|F_a(u_n)-F_a(u)| + \intRn |g_{\ep}||F_a(u_n)-F_a(u)|.
\end{align}
Using \eqref{growth 2} and \eqref{cembed2.2}, the first integral of \eqref{compact 1} can be estimated as
\begin{align}\label{compact2}
    \intRn |g-g_{\ep}|&|F_a(u_n)-F_a(u)|  \le \intRn |g-g_{\ep}|\left(|F_a(u_n)| + |F_a(u)| \right) \no \\
    & \le C \intRn |g-g_{\ep}| \big ( |u_n|^{\ga} + |u|^{\ga} + \abs{u_n}+ \abs{u} \big) \no \\ 
    & \le C \left( \norm{ g - g_{\ep}}_p \left( \norm{u_n}_{2^*_s}^{\ga} + \norm{u}_{2^*_s}^{\ga} \right) +  \left( \norm{g-g_{\ep}}_{\frac{N}{2s}} \norm{g-g_{\ep}}_1 \right)^{\frac{1}{2}} \left(\norm{u_n}_{2^*_s} + \norm{u}_{2^*_s} \right) \right) \no \\
    & \le C \frac{\ep}{L} \left( \norm{u_n}_{2^*_s}^{\ga} + \norm{u}_{2^*_s}^{\ga} + \norm{u_n}_{2^*_s} + \norm{u}_{2^*_s}  \right) < C \ep,
\end{align}
where $C=C(f,a)$. Next, we show that the second integral of \eqref{compact 1} converges to zero as $n \ra \infty$. Let $K$ be the support of $g_{\ep}$. Since $\hst$ is compactly embedded into $L^{\ga}_{loc}(\RN)$ (Proposition \ref{compact embeddings 1}), $u_n \rightarrow u$ in $L^{\ga}(K)$. In particular, up to a subsequence, $u_n(x) \rightarrow u(x)$ a.e. in $K$. From the continuity of $F_a$, $F_a(u_n(x)) \ra F_a(u(x))$ a.e. in $K$. Further, since $|F_a(u_n)| \le C (|u_n|^{\ga} + |u_n|)$, and $\int_{K}|u_n|^{\ga} \ra \int_K |u|^{\ga}, \int_{K} \abs{u_n} \ra \int_{K} \abs{u}$, using the generalized dominated convergence theorem, $F_a(u_n) \ra F_a(u)$ in $L^1(K)$. Thus
\begin{align*}
    \intRn |g_{\ep}||F_a(u_n)-F_a(u)| \le \norm{g_{\ep}}_{\infty} \int_{K} |F_a(u_n)-F_a(u)| \ra 0, \text{ as } n \ra \infty.
\end{align*}
Therefore, from \eqref{compact 1}, $N_a(u_n) \ra N_a(u)$, as $n \ra \infty$. Now for a sequence $(a_n)$, we write 
\begin{align}\label{compact3}
   \abs{N_{a_n}(u_n) - N_a(u)} \le \intRn \abs{g} \abs{F_{a_n}(u_n)- F_a(u_n)} + \intRn \abs{g} \abs{F_a(u_n)-F_{a}(u)}.
\end{align}
By the compactness of $N_a$, the second integral of \eqref{compact3} converges to zero. Further, by noting that $F_{a_n}(u_n)-F_a(u_n) =(a-a_n) u_n$, the first integral of \eqref{compact3} can be estimated as
\begin{align}\label{compact4}
    \intRn \abs{g} \abs{F_{a_n}(u_n)- F_a(u_n)} \le |a_n-a| \intRn \abs{g} \abs{u_n} \le |a_n-a| \norm{g}_1^{\frac{1}{2}} \left( \intRn \abs{g} u_n^2 \right)^{\frac{1}{2}}.
\end{align}
Thus, combining \eqref{compact3} and \eqref{compact4}, and using Proposition \ref{compact embeddings 2} we get $N_{a_n}(u_n) \ra N_a(u)$, as $n \ra \infty$.

\noi (ii) Compactness of $N_a'$ similarly follows using the splitting arguments shown above. Proof of the convergence of $(N_{a_n}'(u_n)(v))$ holds similarly.
\end{proof}

Now we prove that the energy functional $I_a$ satisfies all the conditions of the mountain pass theorem. It is worth mentioning that $I_a$ may not satisfy the Palais-Smale condition due to the weaker Ambrosetti-Rabinowitz type nonlinearities for $f$ in \ref{f1}. Nevertheless, in the first two parts of the following proposition, $I_a$ satisfies a weaker Palais-Smale condition, called the Cerami condition introduced in \cite{Cerami1978}.

\begin{proposition}\label{I_a properties}
  Let $N>2s$ and $\ga \in (2,2^*_s)$. Let $f$ satisfies {\rm \ref{f1}} and {\rm \ref{f2}}. Let $g \in L^1(\RN) \cap L^{\infty}(\RN)$ be positive. Then the following hold:
  \begin{enumerate}
      \item[{\rm(i)}] Let $(u_n)$ be a bounded sequence in $\hst$ such that $I_a(u_n) \rightarrow c$ in $\R$ and $I_a'(u_n) \rightarrow 0$ in $(\hst)'$. Then $(u_n)$ has a convergent subsequence in $\hst$.
      \item[{\rm(ii)}] For any $c\in \R$, there exist $\eta, \be, \rho >0$ such that $\norm{I_a'(u)}[u]_{s,2} \ge \be$ for $u \in I_a^{-1} \left([c-\eta,c+\eta] \right)$ with $[u]_{s,2} \ge \rho$.
      \item[{\rm(iii)}] There exist $\rho>0, \be= \be(\rho)>0$, and $a_1=a_1(\rho)>0$ such that if $a \in (0, a_1)$, then $I_a(u) \ge \be$ for every $u \in \hst$ satisfying $[u]_{s,2}=\rho$. 
      \item[{\rm(iv)}] There exists $\tilde{u} \in \hst$ with $[\tilde{u}]_{s,2} > \rho$ such that $I_a(\tilde{u})<0$.
  \end{enumerate}
\end{proposition}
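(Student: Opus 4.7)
Parts (iii) and (iv) establish the mountain pass geometry and follow from direct estimates on $I_a$. For (iii), plugging the bound $|F_a(t)| \le \ep t^2 + C|t|^\gamma + a|t|$ from \eqref{growth 1} into $I_a(u) = \tfrac{1}{2}[u]_{s,2}^2 - \intRn g F_a(u)$ and using H\"older's inequality together with $g \in L^1 \cap L^\infty \subset L^p$ for every $p \in [1,\infty]$ and the Sobolev embedding \eqref{Sobolev Embedding} gives
\[
I_a(u) \ge \left(\tfrac{1}{2} - C\ep\right) [u]_{s,2}^2 - C [u]_{s,2}^\gamma - Ca\,[u]_{s,2}.
\]
Choosing $\ep$ small, then $\rho$ small so that $C\rho^{\gamma-2} < \tfrac{1}{16}$ (permissible since $\gamma > 2$), and finally $a_1$ small so that $Ca\rho \le \rho^2/16$, yields a positive lower bound $\beta$ on the sphere $[u]_{s,2} = \rho$. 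For (iv), I would take any nontrivial $\phi \in \C_c^\infty(\RN)$ with $\phi \ge 0$ and use the superlinear lower bound $F(t) > Mt^2 - C(M)$ from \eqref{growth 3}; picking $M$ so large that $\tfrac{1}{2}[\phi]_{s,2}^2 < M\intRn g\phi^2$ makes the coefficient of $t^2$ in the upper bound for $I_a(t\phi)$ negative, so $I_a(t\phi) \to -\infty$ as $t \to \infty$, and $\tilde u := t\phi$ for sufficiently large $t$ works.

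For part (i), I would pass to a subsequence with $u_n \rightharpoonup u$ in $\hst$ (possible by reflexivity) and upgrade to strong convergence via $[u_n]_{s,2} \to [u]_{s,2}$. Writing
\[
I_a'(u_n)(u_n - u) = \langle u_n, u_n - u\rangle_{s,2} - N_a'(u_n)(u_n - u),
\]
the left side vanishes since $\norm{I_a'(u_n)} \to 0$ and $u_n - u$ stays bounded. The compactness of $N_a'$ from Proposition~\ref{compact map}(ii) gives $N_a'(u_n) \to N_a'(u)$ in $(\hst)'$, and then the splitting $N_a'(u_n)(u_n-u) = (N_a'(u_n) - N_a'(u))(u_n-u) + N_a'(u)(u_n-u)$ shows that both pieces vanish. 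Hence $\langle u_n, u_n - u\rangle_{s,2} \to 0$, i.e., $[u_n]_{s,2}^2 \to [u]_{s,2}^2$, which in the Hilbert space $\hst$ (equipped with the Gagliardo inner product, equivalent by \eqref{Sobolev Embedding}) upgrades weak to strong convergence.

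Part (ii), the Cerami-type condition, is the main difficulty. I would argue by contradiction, producing a sequence with $I_a(u_n) \to c$, $[u_n]_{s,2} \to \infty$, and $\norm{I_a'(u_n)}\,[u_n]_{s,2} \to 0$; in particular $I_a'(u_n)(u_n) \to 0$. Normalize $v_n := u_n/[u_n]_{s,2}$, so $[v_n]_{s,2} = 1$ and $v_n \rightharpoonup v$ along a subsequence. If $v \not\equiv 0$, then on the positive-measure set $\{v \ne 0\}$ the pointwise a.e. convergence from Proposition~\ref{compact embeddings 1} gives $|u_n(x)| \to \infty$; combined with the superlinear growth $F_a(t)/t^2 \to \infty$ from \eqref{growth 3}, Fatou's lemma forces $\intRn g F_a(u_n)/[u_n]_{s,2}^2 \to \infty$, contradicting the relation $N_a(u_n)/[u_n]_{s,2}^2 \to \tfrac{1}{2}$ that follows from $I_a(u_n) \to c$. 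If $v \equiv 0$, I would apply the Jeanjean-type rescaling: choose $t_n \in [0,1]$ maximizing $t \mapsto I_a(tu_n)$ on $[0,1]$. For any fixed $R>0$, since $Rv_n \rightharpoonup 0$, the compactness of $N_a$ in Proposition~\ref{compact map}(i) gives $I_a(Rv_n) \to R^2/2$, forcing $I_a(t_n u_n) \to +\infty$. As $I_a(u_n) \to c$ is bounded, eventually $t_n \in (0,1)$ so $I_a'(t_n u_n)(u_n) = 0$, and
\[
2\,I_a(t_n u_n) = \intRn g\bigl(f_a(t_n u_n)\,t_n u_n - 2 F_a(t_n u_n)\bigr).
\]
Hypothesis \ref{f2} implies $H(t) := \tfrac{1}{2}f(t)t - F(t)$ is nondecreasing on $[R,\infty)$ (since $H'(t) = \tfrac{t^2}{2}(f(t)/t)'$), so $H(st) \le H(t) + C$ uniformly for $s \in [0,1]$, $t \ge 0$; together with a separate control on the region $\{u_n \le 0\}$ (where the extension $f_a \equiv -a$ contributes a linear term bounded by $\intRn g|u_n|$, itself $o([u_n]_{s,2}^2)$ via Proposition~\ref{compact embeddings 2}), one obtains $2I_a(t_n u_n) \le 2I_a(u_n) - I_a'(u_n)(u_n) + o(1) \to 2c$, the desired contradiction.

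The hardest step will be the delicate bookkeeping in the $v \equiv 0$ case: the comparison $H(st) \le H(t) + C$ controls only the $\{u_n \ge 0\}$ part of the integral, while on $\{u_n \le 0\}$ the semipositone extension $f_a \equiv -a$ contributes a linear term whose growth is not governed by \ref{f2}, and verifying that this contribution is asymptotically negligible after normalizing by $[u_n]_{s,2}^2$ is the principal technical hurdle.
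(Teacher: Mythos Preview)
Your arguments for (i), (iii), and (iv) are correct and essentially identical to the paper's. For (ii) the overall Jeanjean strategy is also the paper's, but your case split has a real gap: the limit $F_a(t)/t^2\to\infty$ from \eqref{growth 3} holds only as $t\to+\infty$; for $t\to-\infty$ one has $F_a(t)=-at$, hence $F_a(t)/t^2\to 0$. Consequently your Fatou argument in the case $v\not\equiv 0$ yields a contradiction only when $|\{v>0\}|>0$; it says nothing if $v\le 0$ with $v\not\equiv 0$. The correct dichotomy is $v^+\not\equiv 0$ versus $v^+\equiv 0$, and the Jeanjean branch must handle all of the second alternative, not just $v\equiv 0$. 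This is salvageable---if $v^+=0$ then $N_a(Rv_n)\to N_a(Rv)=aR\int_{\RN} g|v|$ grows only linearly in $R$, so $I_a(Rv_n)\to R^2/2-aR\int_{\RN} g|v|\to\infty$---but the paper avoids the split altogether: it first proves $w^+=0$ unconditionally from $I_a'(u_n)(u_n)\to 0$, exploiting the sign $f_a(u_n)u_n=-au_n\ge 0$ on $\{u_n<0\}$, the lower bound $f_a(t)\ge Mt-C$ on $\{u_n\ge 0\}$, and Egorov's theorem, and only then runs the maximization argument once.

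Two further points on your Jeanjean comparison. First, your formula $H'(t)=\tfrac{t^2}{2}(f(t)/t)'$ presupposes $f$ differentiable, which \ref{f1}--\ref{f2} do not grant; the paper instead invokes a pointwise inequality from \cite[Proposition~3.3]{BDS23}, namely $F_a(t)-F_a(st)\le\tfrac{1-s^2}{2}\,t f_a(t)+C(R)$ for $s\in[0,1]$, which gives directly $I_a(t_nw_n)-I_a(u_n)\le\tfrac{s_n^2-1}{2}I_a'(u_n)(u_n)+C(R)\|g\|_1$. Second, the remainder you need at the end is $O(1)$, not merely $o([u_n]_{s,2}^2)$: you are comparing $I_a(t_nu_n)\to\infty$ with the bounded quantity $I_a(u_n)-\tfrac12 I_a'(u_n)(u_n)$, so an error growing slower than $[u_n]_{s,2}^2$ does not suffice. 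Your closing paragraph correctly identifies the semipositone region $\{u_n\le 0\}$ as the delicate part, but with the wrong target for the estimate.
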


\begin{proof}
    (i) By the reflexivity, up to a subsequence $u_n \rightharpoonup u$ in $\hst$. We consider the functional $J(v) = [v]_{s,2}$, for $v \in \hst$. From \eqref{energy functional}, $J'(u_n)(u_n-u) = I_a'(u_n)(u_n-u) + N_a'(u_n)(u_n-u).$ By the hypothesis, $\abs{\left<I_a'(u_n), u_n-u \right>} \le \norm{I_a'(u_n)} [u_n-u]_{s,2} \ra 0$ as $n \ra \infty$. Moreover, since $N_a'$ is compact (Proposition \ref{compact map}), $N_a'(u_n)(u_n-u) \ra 0$ as $n \ra \infty$. Therefore, $J'(u_n)(u_n-u) \ra 0$ as $n \ra \infty$. Further, since $u_n \rightharpoonup u$ in $\hst$ and $J \in \C^1(\hst, \R)$, we also have $J'(u)(u_n-u) \ra 0.$ Therefore, $[u_n-u]_{s,2}^2 = J'(u_n)(u_n-u) - J'(u)(u_n-u)  \ra 0$, as required.

    \noi (ii) Our proof adapts the arguments given in \cite[Proposition 3.6]{BDS23}. On the contrary, assume that $(u_n)$ is a sequence in $\hst$ satisfying
\begin{align}\label{assumption}
    I_a(u_n) \ra c, [u_n]_{s,2} \ra \infty, \text{ and } \norm{I_a'(u_n)}[u_n]_{s,2} \ra 0, \text{ as } n \ra \infty.
\end{align}
Set $w_n= \frac{u_n}{[u_n]_{s,2}}$. Then $[w_n]_{s,2}=1$, and by the reflexivity, up to a subsequence, $w_n \rightharpoonup w$ in $\hst$. 

\noi  \textbf{Step 1:} This step shows that $w^+ = 0$ a.e. in $\RN$. We consider the set $\Om= \left\{x \in \RN : w(x)>0 \right\}$. On the contrary, we assume $|\Om|>0$. Using  \eqref{assumption}, $ [u_n]_{s,2}^2 - N_{a}'(u_n)(u_n) = I_a'(u_n)(u_n) \ra 0$. Hence for each $n \in \N$, we have 
\begin{align}\label{measure01}
    1 =  \frac{1}{[u_n]_{s,2}^2} \left( \intRn gf_a(u_n)u_n + \ep_n \right), 
\end{align}
where $\ep_n \ra 0$ as $n \ra \infty$. From the compactness of $\hst \hookrightarrow L^2_{loc}(\RN)$ and by the  Egorov's theorem, there exists $\Om_1 \subset \Om$ with $|\Om_1|>0$ such that $(w_n)$ converges to $w$ uniformly on $\Om_1$. Thus there exists $n_1 \in \N$ such that for $n \ge n_1$, $w_n \ge 0$ and hence $u_n \ge 0$ a.e. on $\Om_1$. This implies that, for every $n \ge n_1$,  $\Om_1 \subset \Om^+_n$ where $\Om^+_n:= \left\{ x \in \RN : u_n(x) \ge 0 \right\}.$ From the definition of $f_a$,  
$f_a(u_n)u_n = -au_n \ge 0$ on $\RN \setminus \Om^+_n$.  Therefore, using \eqref{measure01} and using the lower bound of $f_a$ in \eqref{growth 3}, for all $n \ge n_1$ we obtain  
\begin{align}\label{measure02}
    1  \ge \frac{1}{[u_n]_{s,2}^2} \left\{ \int_{\Om^+_n} gf_a(u_n)u_n + \ep_n \right\} 
    & \ge M \int_{\Om^+_n} g \frac{u_n^2}{[u_n]_{s,2}^2} -  \frac{(C_M + a)}{[u_n]_{s,2}} \int_{\Om^+_n} g \frac{u_n}{[u_n]_{s,2}} + \frac{\ep_n}{[u_n]_{s,2}^2} \no \\
    & \ge M \int_{\Om_1} g w_n^2 - \frac{(C_M + a)}{[u_n]_{s,2}} \int_{\Om^+_n} g w_n + \frac{\ep_n}{[u_n]_{s,2}^2}.
\end{align}
Further, $\int_{\Om_1} gw_n^2 \ra \int_{\Om_1} gw^2$ (by Proposition \ref{compact embeddings 2}) and $\int_{\Om^+_n} g w_n \le \int_{\RN} g w_n \le C(N,s) \left( \norm{g}_1 \norm{g}_{\frac{N}{2s}} \right)^{\frac{1}{2}}$. We take the limit as $n \ra \infty$ in \eqref{measure02} and using $[u_n]_{s,2} \ra \infty$, to obtain  
\begin{align*}
    1 \ge M \int_{\Om_1} gw^2, \text{ for arbitrarily large } M>0,
\end{align*}
which is a contradiction. Thus, $|\Om|=0$. \\
\noi  \textbf{Step 2:} For a fixed $n \in \N$, we set $m_n := \sup \left\{ I_a(tw_n): 0 \le t \le [u_n]_{s,2} \right\}$. Since the map $t \mapsto I_a(tw_n)$ is continuous on $[0,[u_n]_{s,2}]$, there exists $t_n \in [0,[u_n]_{s,2}]$ such that $m_n=I_a(t_n w_n)$. First, we claim that $m_n \ra \infty,$ as $n \ra \infty$. Since the sequence $(u_n)$ is unbounded, there exists $n_2 \in \N$ so that for $n \ge n_2$, $[u_n]_{s,2} \ge M$. Hence by definition, $m_n \ge I_a(Mw_n)$. Using the compactness of $N_a$ (Proposition \ref{compact map}) and $|\Om| = 0$ (Step 1), we get 
\begin{align*}
    \lim_{n \ra \infty} I_a(Mw_n) = \frac{M^2}{2} - \lim_{n \ra \infty} \intRn gF_a(Mw_n)
     = \frac{M^2}{2} - \intRn gF_a(Mw) 
     = \frac{M^2}{2} + aM\int_{\Om^c} gw.
\end{align*}
Since the quantity $\frac{M^2}{2} + aM\int_{\Om^c} gw$ is sufficiently large, we conclude that $I_a(Mw_n) \ra \infty$, as $n \ra \infty$, and hence the claim holds. Next, for each $n \in \N$,
\begin{align}\label{PS4}
    I_a(t_nw_n)-I_a(u_n) = \frac{t_n^2- [u_n]^2_{s,2}}{2} + \intRn g \left(F_a(u_n) - F_a(t_nw_n) \right). 
\end{align}
Set $s_n=\frac{t_n}{[u_n]_{s,2}}$. Then $s_n \in [0,1]$ and $s_n u_n=t_nw_n$. Clearly, $F_a(u_n(x)) - F_a(s_nu_n(x)) =0$ whenever $u_n(x)=0$. If $u_n(x) \neq 0$, then for $R>0$ given in \ref{f2}  we apply \cite[Proposition 3.3]{BDS23} to get 
\begin{align*}
    F_a(u_n(x)) - F_a(s_nu_n(x))  \le \frac{1-s_n^2}{2} u_n(x)f_a(u_n(x)) +C(R).
\end{align*}
Therefore, \eqref{PS4} yields
\begin{align*}
    I_a(t_n w_n)-I_a(u_n) & \le \frac{1-s_n^2}{2} \left( -[u_n]_{s,2}^2 + \intRn gu_nf_a(u_n) \right) + C(R) \norm{g}_1 \\
    & = \frac{s_n^2-1}{2}  I_a'(u_n)(u_n) + C(R) \norm{g}_1.
\end{align*}
Hence, in view of \eqref{assumption}, the sequence $(I_a(t_nw_n)-I_a(u_n))$ is bounded. On the other hand, individually $(I_a(u_n))$ is bounded (see \eqref{assumption}) and  $(I_a(t_nw_n))$ is unbounded, resulting in a contradiction. Therefore, such an unbounded sequence $(u_n)$ in \eqref{assumption} does not exist.

\noi (iii) Let $\ep>0$ be such that $\ep B_g \norm{g}_{\frac{N}{2s}} < \frac{1}{2}$, where $B_g$ is the best constant of \eqref{Sobolev Embedding}. Then using \eqref{growth 1} and the embeddings $\hst \hookrightarrow L^{\ga}(\RN,|g|)$ (Proposition \ref{compact embeddings 2}) we get
\begin{align}\label{MP1}
    I_a(u) & \ge \frac{[u]_{s,2}^2}{2} - \ep \intRn gu^2 - C \intRn g|u|^{\ga} - a\intRn g\abs{u} \no \\
    & \ge [u]_{s,2}^2 \left( \frac{1}{2} - \ep B_g \norm{g}_{\frac{N}{2s}} - C [u]_{s,2}^{\ga-2} \right) - aC[u]_{s,2},
\end{align}
where $C=C(f,g,N,s)$. Taking $[u]_{s,2} = \rho$, $I_a(u) \ge A(\rho) - aC\rho$, where $A(\rho)= C \rho^2 (1 - C_1 \rho^{\ga -2})$ for some constants $C,C_1$ independent of $a$. Let $\rho_1$ be the first nontrivial zero of $A$. For $\rho < \rho_1$, fix $a_1 \in (0, \frac{A(\rho)}{C\rho})$ and $\be=A(\rho) - a_1C\rho$. Therefore, using \eqref{MP1}, $I_a(u) \ge \be$ for every $a \in (0, a_1)$.

\noi (iv) We consider $\phi \in \C^2(\RN), \phi \ge 0$, and $[\phi]_{s,2}=1$. For $M,t>0$, using  \eqref{growth 3} we get $F_a(t \phi) > M(t\phi)^2-(at\phi+C(M))$. Hence 
\begin{align*}
    I_a(t \phi) \le t^2 \left( \frac{1}{2} - M \intRn g \phi^2 \right) + at \intRn g \phi + C(M) \norm{g}_1.
\end{align*}
Choose $M>\left( 2\int_{\RN} g \phi^2 \right)^{-1}$. Then $I_a(t \phi) \ra -\infty$, as $t \ra \infty$, i.e., there exists $t_1>\rho$ so that $I_a(t \phi)<0$ for $t > t_1$. Thus $\tilde{u} = t\phi$ with $t>t_1$ is the required function.
\end{proof}

\section{Existence, uniform boundedness, and regularity of the solutions}

In this section, we study the existence of solutions to \eqref{SP} and their various properties. This section contains the proof of Theorem \ref{existence}-\ref{regularity}. 

\noi \textbf{Proof of Theorem \ref{existence}:}
Recall $a_1, \beta, \tilde{u}$ as given in (iii) and (iv) of Proposition \ref{I_a properties}. For $a \in (0, a_1)$ using Proposition~\ref{I_a properties} and the fact that $[\cdot]_{s,2}$ is an equivalent norm in $\hst$ (from \eqref{Sobolev Embedding}), we observe that all the properties of the mountain pass theorem in \cite[Theorem~2.1]{S1991} are verified. Therefore, applying \cite[Theorem~2.1]{S1991} there exists $u_a \in \hst$ satisfying
\begin{align}\label{MP2} 
    I_a(u_a) =\inf_{\ga \in \Ga_{\tilde{u}}} \max_{s \in [0,1]} I_a(\ga(s)) \ge \be, \text{ and } I_a'(u_a)=0,
\end{align}
where $\Ga_{\tilde{u}} := \left\{ \ga \in \C([0,1], \hst) : \ga(0) = 0 \text{ and } \ga(1) = \tilde{u} \right\}$. Thus, $u_a$ is a nontrivial solution of \eqref{SP}. First, we show that the set $\{ I_a(u_a): a \in (0,a_1)\}$ is uniformly bounded. Define $\tilde{\ga} : [0,1] \ra \hst$ by $\tilde{\ga}(s) = s\tilde{u}$, where $\tilde{u}=t\phi$ for some $t>t_1$. Clearly, $\tilde{\ga} \in \Gamma_{\tilde{u}}$ and hence using \eqref{MP2} for $a \in (0,a_1)$,
\begin{align}\label{MP3}
    I_a(u_a) \le \max_{s \in [0,1]} I_a(\tilde{\ga}(s)) = \max_{s \in [0,1]} I_a(st\phi).
\end{align}
Further, since $F_a(st\phi) \ge M(st\phi)^2-a_1st\phi-C(M)$ (see \eqref{growth 3}) where $M \int_{\RN} g \phi^2 > \frac{1}{2}$, we get
\begin{align*}
    \max_{s \in [0,1]} I_a(st\phi) & \le \max_{s \in [0,1]} \left( s^2 t^2 \left( \frac{1}{2} - M \int_{\RN} g \phi^2 \right)  + st a_1 \int_{\RN} g \phi + C(M) \norm{g}_1\right) \\
    & \le t a_1 \int_{\RN} g \phi + C(M) \norm{g}_1.
\end{align*}
Thus from \eqref{MP3}, it is evident that $I_a(u_a) \le C$ for all $a \in (0,a_1)$. Next, we prove the existence of $a_2 \in (0,a_1)$ such that the set $\{ [u_a]_{s,2} : a\in (0,a_2)\}$ is uniformly bounded, i.e., $[u_a]_{s,2} \le C$ for all $a \in (0,a_2)$ and for some $C$. On the contrary, assume that no such $a_2$ and $C$ exist. Then there exists a sequence $(a_n)$ in $(0,a_1)$, such that $ a_n \ra 0,$ and $[u_{a_n}]_{s,2} \ra \infty,$ as $n \ra \infty.$
Observe that $I_{a_n}'(u_{a_n}) = 0$ for each $n \in \N$, and up to a subsequence, $I_{a_n}(u_{a_n}) \ra c$ in $\R$. Set $w_{a_n}= u_{a_n} {[u_{a_n}]^{-1}_{s,2}}$. Suppose $w_{a_n} \rightharpoonup w$ in $\hst. $ Now using the convergence $N_{a_n}(w_{a_n}) \ra N_0(w)$ (by Proposition \ref{compact map}-(i)), we can proceed with the same arguments as given in the proof of Proposition \ref{I_a properties}-(ii) (with $a$ replaced by $a_n$) to get the following contradiction:
\begin{align*}
    I_{a_n}(t_{a_n}w_{a_n})
    \le C(R) \norm{g}_1 + I_{a_n}(u_{a_n}), \; \forall \, n \in \N, \text{ and } I_{a_n}(t_{a_n}w_{a_n}) \ra \infty, \text{ as } n \ra \infty.
\end{align*}
Thus there exists $C$ such that $[u_a]_{s,2} \le C$ for all $a \in (0,a_2)$. Therefore, $(u_a)$ is uniformly bounded in $\hst$. \qed

Now we discuss the regularity of the mountain pass solution $u_a$. Before proceeding to the proof of Theorem~\ref{regularity}, we recall a result in \cite[Theorem~1.1]{No2021}, where the author provided a sufficient condition for H\"{o}lder regularity of weak solutions to a class of nonlocal equations. To state the result,  we define the following spaces
\begin{align*}
    & L_{2s}^1(\RN) := \left\{ u \in L_{loc}^1(\RN) : \intRn \frac{\abs{u(x)}}{1 + \abs{x}^{N+2s}} \, \dx < \infty  \right\}, \text{ and } \\
    & W_{loc}^{s,2}(\RN) := \left\{ u \in L_{loc}^2(\RN) : \left( \; \iint\limits_{K \times K}\frac{(u(x)-u(y))^2}{|x-y|^{N+2s}}\,\dx\dy \right)^{\frac{1}{2}} < \infty \right\},  
\end{align*}
where $K \subset \RN$ is any relatively compact open set.  

\begin{proposition}\label{holder regularity}
Let $s \in (0,1)$ and $N>2s$. Let $h \in L_{loc}^q(\RN)$ for $q> \frac{N}{2s}$. Assume that $u \in W_{loc}^{s,2}(\RN) \cap L_{2s}^1(\RN)$ is a weak solution of the equation $(-\De)^s u=h$ in $\RN$. Then $u \in C^{0, \al}_{loc}(\RN)$ for $\al \in (0, \min\{2s-\frac{N}{q},1\})$.
\end{proposition}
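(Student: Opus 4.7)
The plan is to localise the equation and reduce the statement to two classical ingredients: (a) H\"{o}lder regularity of the Riesz potential acting on $L^q$ data with $q>\tfrac{N}{2s}$, and (b) interior smoothness of $s$-harmonic functions. Fix $x_0 \in \RN$ and $R>0$, choose a cut-off $\eta \in \C_c^\infty(\RN)$ with $\eta \equiv 1$ on $B_R(x_0)$ and $\mathrm{supp}(\eta) \subset B_{2R}(x_0)$, and set $\widetilde{h}:= \eta h$, which lies in $L^q(\RN)$ with compact support by the local integrability hypothesis on $h$.

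Next, let $v := I_{2s} \ast \widetilde{h}$, where $I_{2s}$ is the Riesz kernel of order $2s$, so that $(-\De)^s v = \widetilde{h}$ in $\RN$ in the distributional sense. Since $\widetilde{h}$ is compactly supported and $q>\tfrac{N}{2s}$, a Morrey-type estimate for the fractional integral (equivalently, the continuous embedding $I_{2s}:L^q(\RN) \ra C^{0,\al}_{loc}(\RN)$ for any $\al<\min\{2s-\tfrac{N}{q},1\}$) yields $v \in C^{0,\al}_{loc}(\RN)$ with quantitative bounds on compact sets. Moreover $v(x)$ decays like $|x|^{-(N-2s)}$ at infinity, so $v \in L^1_{2s}(\RN)$. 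Setting $w:= u-v$ then gives $w \in W_{loc}^{s,2}(\RN) \cap L_{2s}^1(\RN)$ together with the weak identity $(-\De)^s w = h - \widetilde{h}=0$ on $B_R(x_0)$.

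Finally, interior regularity for weak solutions of the homogeneous fractional Laplace equation (as developed by Silvestre and Caffarelli-Silvestre) gives $w \in C^\infty(B_{R/2}(x_0))$, and in particular $w \in C^{0,\al}(B_{R/2}(x_0))$ for every $\al \in (0,1)$. Combining this with the H\"{o}lder control of $v$ and varying $x_0, R$ delivers $u=v+w \in C^{0,\al}_{loc}(\RN)$ for any admissible $\al$. The main obstacle is bookkeeping at the level of the weak formulation: one must verify that the distributional identity $(-\De)^s v = \widetilde h$ is compatible, when tested against functions in $\C_c^\infty(B_R(x_0))$, with the weak formulation inherited by $u \in W_{loc}^{s,2}(\RN) \cap L_{2s}^1(\RN)$, so that the subtraction $w=u-v$ is \emph{bona fide} a weak $s$-harmonic function on $B_R(x_0)$, and that the nonlocal tail of $w$ (controlled through the $L^1_{2s}$-norms of $u$ and $v$) does not obstruct the cited interior regularity theorem.
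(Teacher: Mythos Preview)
The paper does not actually prove this proposition: it is stated there as a quotation of \cite[Theorem~1.1]{No2021} (Nowak), with no argument given. So there is nothing to compare at the level of proof strategy; your proposal is not a reconstruction of the paper's argument but an independent proof.

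As an independent proof, your decomposition $u=v+w$ with $v=I_{2s}\ast(\eta h)$ and $w$ $s$-harmonic on $B_R(x_0)$ is a standard and correct route. The two ingredients you invoke are both available in the literature: the mapping $I_{2s}:L^q(\RN)\to C^{0,2s-N/q}_{loc}(\RN)$ for $q>\tfrac{N}{2s}$ with $2s-\tfrac{N}{q}<1$ is the Morrey endpoint of Hardy--Littlewood--Sobolev, and interior smoothness of weak $s$-harmonic functions in $W^{s,2}_{loc}\cap L^1_{2s}$ is the Silvestre/Caffarelli--Silvestre theory. The bookkeeping point you flag is the only genuine check: you need $v\in W^{s,2}_{loc}(\RN)$ so that $w=u-v$ inherits the weak formulation on $B_R(x_0)$. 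This holds because $\widetilde h=\eta h\in L^q\cap L^2(\RN)$ (compact support plus $q>\tfrac{N}{2s}\ge 1$ gives $L^2$ via interpolation when $N\ge 2$, and one argues directly for small $N$), whence $v\in H^{2s}_{loc}(\RN)\subset W^{s,2}_{loc}(\RN)$; the tail condition $v\in L^1_{2s}(\RN)$ follows from the $|x|^{-(N-2s)}$ decay you noted. With that in place the subtraction is legitimate and the interior regularity theorem applies with the tail controlled by $\|u\|_{L^1_{2s}}+\|v\|_{L^1_{2s}}$. Compared to Nowak's approach (Campanato-type estimates for general nonlocal kernels), yours is more elementary but specific to the pure fractional Laplacian, where the explicit fundamental solution is available.
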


\noi \textbf{Proof of Theorem~\ref{regularity}:}  Let $a_2$ be as given in Theorem~\ref{existence} and $a \in (0,a_2)$. For $\tau >0$, 
we consider the truncation function $u_{\tau} \in L^{\infty}(\RN)$ associated with $u_a$; defined as $u_{\tau} = \max\{ -\tau, \min \{u_a, \tau \} \}.$ For $r \ge 2$, set $\phi= u_a \abs{u_{\tau}}^{r-2}$. Clearly, $\phi \in L^{\infty}(\RN) \cap \hst$. Taking $\phi$ as a test function in the weak formulation of $u_a$, we have 
\begin{align}\label{weak form}
    \iint\limits_{\RN \times \RN} &\frac{(u_a(x)-u_a(y)(u_a(x) \abs{u_{\tau}(x)}^{r-2} - u(y) \abs{u_{\tau}(y)}^{r-2})}{|x-y|^{N+2s}}\,\dx\dy \no \\
    &= \intRn g(x) f_a(u_a) u_a(x) \abs{u_{\tau}(x)}^{r-2} \, \dx.
\end{align}
We use \cite[Lemma 3.1]{IMS15} and the embedding \eqref{Sobolev Embedding} to estimate the L.H.S of \eqref{weak form} as
\begin{align*}
    & \iint\limits_{\RN \times \RN}\frac{(u_a(x)-u_a(y)(u_a(x) \abs{u_{\tau}(x)}^{r-2} - u_a(y) \abs{u_{\tau}(y)}^{r-2})}{|x-y|^{N+2s}}\,\dx\dy \\
    & \ge \frac{4(r-1)}{r^2} \iint\limits_{\RN \times \RN} \frac{(u_a(x) \abs{u_{\tau}(x)}^{\frac{r}{2}-1} - u_a(y) \abs{u_{\tau}(y)}^{\frac{r}{2}-1})}{|x-y|^{N+2s}} \,\dx\dy \\
    & \ge \frac{4(r-1)}{r^2} C(N,s) 
    \left( \intRn \left|u_a(x)\abs{u_{\tau}(x)}^{\frac{r}{2}-1}\right|^{2^*_s} \, \dx \right)^{\frac{2}{2^*_s}}.
\end{align*}
Hence from \eqref{weak form} we get for every $\tau>0$ that
\begin{align*}
    \left( \intRn \left|u_a(x)\abs{u_{\tau}(x)}^{\frac{r}{2}-1}\right|^{2^*_s} \, \dx \right)^{\frac{2}{2^*_s}} \le \frac{r^2}{4(r-1)} C(N,s) \intRn g(x) \abs{f_a(u_a)} \abs{u_a(x)}^{r-1} \, \dx.
\end{align*}
Letting $\tau \ra \infty$ the monotone convergence theorem yields 
\begin{align}\label{regular1}
     \left( \intRn \abs{u_a(x)}^{\frac{r}{2}2^*_s} \, \dx \right)^{\frac{2}{2^*_s}} 
    \le \frac{r^2}{4(r-1)} C(N,s) \intRn g(x) \abs{f_a(u_a)} \abs{u_a(x)}^{r-1} \, \dx.
\end{align}

\noi \textbf{Step 1}: In this step, for $r_1 = 2^*_s+1$, we show that $\abs{u_a}^{r_1} \in L^{\frac{2^*_s}{2}}(\RN)$ and there exists $C$ such that $\norm{u_a^{r_1}}_{\frac{2^*_s}{2}} \le C$ for all $a \in (0,a_2)$. Let $\ep>0$. Using the growth condition \ref{f1.1}, for every $a< a_2$, we have $\abs{f_a(u_a)} \le C + \ep \abs{u_a}^{2^*_s-1}$ where $C= C(\ep, a_2)$ (see \eqref{growth 2.1}). Applying the H\"{o}lder's inequality with the conjugate pair $(\frac{2^*_s}{2},\frac{2^*_s}{2^*_s-2})$ and uniform boundedness of $(u_a)$ in $\hst$ (from Theorem~\ref{existence}), we have the following estimates for all $a \in (0,a_2)$: 
\begin{align*}
    & \intRn g(x) \abs{u_a(x)}^{2^*_s} \, \dx \le \norm{g}_{\infty} \norm{u_a}_{\hst}^{2^*_s} \le C \norm{g}_{\infty}; \\
    & \intRn g(x) \abs{u_a(x)}^{2^*_s-2} \abs{u_a(x)}^{2^*_s+1} \, \dx  \le \norm{g}_{\infty} \norm{u_a}_{\hst}^{2^*_s-2} \left( \intRn \abs{u_a(x)}^{\frac{2^*_s}{2}(2^*_s+1)} \, \dx \right)^{\frac{2}{2^*_s}} \\
    & \quad \le C \norm{g}_{\infty} \left( \intRn \abs{u_a(x)}^{\frac{2^*_s}{2}(2^*_s+1)} \, \dx \right)^{\frac{2}{2^*_s}},
\end{align*}
where $C$ does not depend on $a$ and $\epsilon$. Hence \eqref{regular1} yields
\begin{align}\label{regular2}
    \left( \intRn \abs{u_a(x)}^{\frac{2^*_s}{2}r_1} \, \dx \right)^{\frac{2}{2^*_s}} 
    \le \frac{r_1^2}{4(r_1-1)} C \norm{g}_{\infty} \left( C(\ep,a_2) + \ep  \left( \intRn \abs{u_a(x)}^{\frac{2^*_s}{2}r_1} \, \dx \right)^{\frac{2}{2^*_s}} \right).
\end{align}
Now we choose $\ep$ such that 
\begin{align*}
    \left(\frac{r_1^2}{4(r_1-1)} C \norm{g}_{\infty} \right)\ep < \frac{1}{2}.
\end{align*}
Therefore, from \eqref{regular2}, there exists $C$ such that 
\begin{align}\label{unifromest}
    \frac{1}{2} \left( \intRn \abs{u_a(x)}^{\frac{2^*_s}{2}r_1} \, \dx \right)^{\frac{2}{2^*_s}} 
    \le \frac{r_1^2}{4(r_1-1)} C \norm{g}_{\infty}, \quad \forall \, a \in (0,a_2). 
\end{align}
Thus the set $ \{ \abs{u_a}^{r_1} : a \in (0,a_2) \}$ is uniformly bounded in $L^{\frac{2^*_s}{2}}(\RN)$.

\noi \textbf{Step 2}: In this step, we obtain the uniform $L^{\infty}$ bound of $(u_a)$. 
Using \eqref{growth 1} and \eqref{regular1}, for $r > r_1$  we have
\begin{align}\label{regular3}
    \left( \intRn \abs{u_a(x)}^{\frac{r}{2}2^*_s} \, \dx \right)^{\frac{2}{2^*_s}} 
    \le \frac{r^2}{4(r-1)} C(N,s,f,a_2) \intRn g(x) (1+ \abs{u_a(x)}^{2^*_s-1}) \abs{u_a(x)}^{r-1} \, \dx.
\end{align}
Set $m_1=\frac{2^*_s(2^*_s-1)}{r-2}$ and $m_2:= r-1-m_1$. Observe that $m_1<2^*_s$ whenever $r>r_1$. Applying Young's inequality with $(\frac{2^*_s}{m_1}, \frac{2^*_s}{2^*_s-m_1})$ we have the following estimate:
\begin{align*}
    \abs{u}^{r-1}= \abs{u}^{m_1}\abs{u}^{m_2} \le \frac{m_1}{2^*_s} \abs{u}^{2^*_s} + \frac{2^*_s - m_1}{2^*_s} \abs{u}^{\frac{2^*_sm_2}{2^*_s - m_1}} \le \abs{u}^{2^*_s} + \abs{u}^{\frac{2^*_sm_2}{2^*_s - m_1}},
\end{align*}
where we can verify $\frac{2^*_sm_2}{2^*_s - m_1} = 2^*_s -2+ r$. Hence by Theorem~\ref{existence},
\begin{align*}
    \intRn \abs{u_a(x)}^{r-1} \, \dx & \le \intRn \abs{u_a(x)}^{2^*_s} \, \dx + \intRn \abs{u_a(x)}^{2^*_s -2+ r} \, \dx \\
    & \le \norm{u_a}_{\hst}^{2^*_s} + \intRn \abs{u_a(x)}^{2^*_s -2+ r} \, \dx \\
    & \le C\left(1+ \intRn \abs{u_a(x)}^{2^*_s -2+ r} \, \dx\right),
\end{align*}
where $C$ does not depend on $a$. Further, notice that $\frac{r}{2(r-1)}< 1$ since $r>2$. Therefore, from \eqref{regular3}  we obtain the following estimate: 
\begin{align}\label{regular4}
    \left( 1+ \intRn \abs{u_a(x)}^{{\frac{r}{2}2^*_s}} \, \dx  \right)^{\frac{2}{2^*_s(r-2)}} \le r^{\frac{1}{r-2}}  \left( C\norm{g}_{\infty} \right)^{\frac{1}{r-2}}  
    \left( 1+ \intRn \abs{u_a(x)}^{2^*_s -2+ r} \, \dx  \right)^{\frac{1}{r-2}},
\end{align}
where $C=C(N,s,f,a_2)$. We consider the sequence $(r_j)$ defined as follows:
\begin{align*}
    r_1= 2^*_s+ 1, r_2 = 2 + \frac{2^*_s}{2}(r_1-2), \cdot \cdot \cdot,  r_{j+1} = 2 + \frac{2^*_s}{2}(r_j-2).  
\end{align*}
Notice that $2^*_s - 2 + r_{j+1} = \frac{2^*_s}{2} r_j$ and $r_{j+1} - 2 = \left( \frac{2^*_s}{2} \right)^j (r_1-2)$. Then \eqref{regular4} yields
\begin{align*}
    \left( 1+ \intRn \abs{u_a(x)}^{{\frac{r_{j+1}}{2}2^*_s}} \, \dx  \right)^{\frac{2}{2^*_s(r_{j+1}-2)}} \le \left( r_{j+1} C \norm{g}_{\infty} \right)^{\frac{1}{r_{j+1}-2}}
     \left( 1+ \intRn \abs{u_a(x)}^{\frac{2^*_s}{2} r_j} \, \dx  \right)^{\frac{2}{2^*_s(r_{j}-2)}}.
\end{align*}
Set $D_j =  \left( 1+ \intRn \abs{u_a(x)}^{\frac{2^*_s}{2} r_j} \, \dx  \right)^{\frac{2}{2^*_s(r_{j}-2)}}$. We iterate the above inequality to get 
\begin{align}\label{regular5}
    D_{j+1} \le \displaystyle C^{\sum_{k=2}^{j+1} \frac{1}{r_k - 2}} \left( \prod_{k=2}^{j+1} r_k^{\frac{1}{r_{k} - 2}} \right) D_1,
\end{align}
where $C= C(N,s,f,g,a_2)$. Using \eqref{unifromest} of Step 1, $D_1 \le C$ for some $C$ independent of $a$. Moreover, 
\begin{align*}
    D_{j+1} \ge  \left( \left(\intRn u_a(x)^{\frac{2^*_s r_{j+1}}{2}} \, \dx \right)^{\frac{2}{2^*_s r_{j+1}}} \right)^{\frac{r_{j+1}}{r_{j+1}-2}} = \norm{u_a}_{L^{\frac{2^*_s r_{j+1}}{2}}(\RN)}^{\frac{r_{j+1}}{r_{j+1}-2}}.
\end{align*}
Therefore, from \eqref{regular5} it is evident that 
\begin{align}\label{regular6}
    \norm{u_a}_{L^{\frac{2^*_s r_{j+1}}{2}}(\RN)}^{\frac{r_{j+1}}{r_{j+1}-2}} \le \displaystyle C^{\sum_{k=2}^{j+1} \frac{1}{r_k - 2}} \left( \prod_{k=2}^{j+1} r_k^{\frac{1}{r_{k} - 2}} \right)  C, \quad \forall \, a \in (0,a_2).
\end{align}
By noting that $r_j \ra \infty$ as $j \ra \infty$, we use the above estimate and the interpolation arguments to get $u_a \in L^r(\RN)$ for any $r \in [2_s^*, \infty)$, and $\norm{u_a}_r \le C(r, N, s, f, g, a_2)$ for all $a \in (0,a_2)$. Furthermore,
\begin{align*}
  \sum_{k=2}^\infty\frac{1}{r_k-2}= \frac{N}{2s(2^*_{s} - 2)}, \text{ and }
  \prod_{k=2}^{\infty}r_{k}^{\frac{1}{r_k - 2}} = \exp \left(\frac{2}{(2^*_s - 2)^2} \log \left(2 \left( \frac{2^*_s(2^*_s-2)}{2} \right)^{2^*_s} \right) \right).
\end{align*}
Thus taking the limit as $j \ra \infty$ in \eqref{regular6}, we conclude that $\norm{u_a}_{\infty} \le C(N, s, f, g, a_2)$ for all $a \in (0,a_2)$. 

\noi \textbf{Step 3}: This step verifies the continuity of $u_a$. Now, $u_a \in L^{\infty}(\RN) \subset L^1_{2s}(\RN)$. For $q> \frac{N}{2s}$,  
\begin{align*}
    \intRn (g f_a(u_a))^q \le C \intRn g^q(1 + \abs{u_a}^{(2^*_s-1)q}) \le C \norm{g}^q_q (1+ \norm{u_a}_{\infty}^{(2^*_s-1)q}) \le C, \quad \forall \, a \in (0,a_2).
\end{align*}
Further, using Proposition \ref{compact embeddings 1}, $\hst \hookrightarrow L_{loc}^2(\RN)$, and hence $\hst \hookrightarrow W_{loc}^{s,2}(\RN)$. Therefore, applying Proposition \ref{holder regularity} we conclude that $u_a \in C^{0, \al}_{loc}(\RN)$ for $\al \in (0, \min\{2s-\frac{N}{q},1\})$. In particular, $u_a \in \C(\RN)$ for all $a \in (0,a_2)$. This completes the proof. \qed  

Next, we prove a uniform lower bound for $(u_a)$ in $L^{\infty}(\RN)$.

\begin{proposition}\label{lowerbound}
Let $f,g,a_2,u_a$ be as given in Theorem~\ref{regularity}. Then there exist $\tilde{a}_2 \in (0,a_2)$ and $\be_1 > 0$ such that $\norm{u_a}_{\infty} \ge \be_1$, for all $a \in (0,\tilde{a}_2)$.
\end{proposition}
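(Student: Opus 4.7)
The plan is to argue by contradiction, using the positive mountain pass energy level $\beta>0$ from Proposition~\ref{I_a properties}-(iii) as a ``floor'' that prevents the solutions from collapsing to zero in $L^{\infty}$. Recall from the proof of Theorem~\ref{existence} that for every $a\in(0,a_2)$ the solution $u_a$ satisfies $I_a(u_a)\ge\beta$ (where $\beta$ is independent of $a$) and $I_a'(u_a)=0$.

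Suppose, towards a contradiction, that there exists a sequence $(a_n)\subset(0,a_2)$ with $a_n\to 0$ and $\norm{u_{a_n}}_{\infty}\to 0$. Fix a small $\epsilon>0$. Using \ref{f1}, there is $t_1=t_1(\epsilon)>0$ such that $|f(t)|\le \epsilon\, t$ for $0\le t\le t_1$. For $n$ sufficiently large we have $\norm{u_{a_n}}_{\infty}<t_1$, and from the definition of $f_a$ this yields the pointwise bounds
\begin{align*}
|f_{a_n}(u_{a_n}(x))|\le \epsilon\,|u_{a_n}(x)|+a_n, \qquad |F_{a_n}(u_{a_n}(x))|\le \tfrac{\epsilon}{2}u_{a_n}(x)^2+a_n|u_{a_n}(x)|.
\end{align*}

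Next I would use the fact that $u_{a_n}$ is a critical point to control the Gagliardo seminorm. Testing $I_{a_n}'(u_{a_n})=0$ against $u_{a_n}$ gives $[u_{a_n}]_{s,2}^2=\int_{\RN} g\,f_{a_n}(u_{a_n})u_{a_n}$. Since $g\in L^1(\RN)$ and $\norm{u_{a_n}}_{\infty}\to 0$, the pointwise estimate above yields
\begin{align*}
[u_{a_n}]_{s,2}^2\le \epsilon\,\norm{u_{a_n}}_{\infty}^2\norm{g}_1+a_n\,\norm{u_{a_n}}_{\infty}\norm{g}_1\longrightarrow 0.
\end{align*}
Combining this with the companion estimate
\begin{align*}
\left|\intRn g\,F_{a_n}(u_{a_n})\right|\le \tfrac{\epsilon}{2}\norm{u_{a_n}}_{\infty}^2\norm{g}_1+a_n\norm{u_{a_n}}_{\infty}\norm{g}_1\longrightarrow 0,
\end{align*}
we conclude $I_{a_n}(u_{a_n})=\tfrac{1}{2}[u_{a_n}]_{s,2}^2-\int_{\RN}g\,F_{a_n}(u_{a_n})\to 0$, which contradicts $I_{a_n}(u_{a_n})\ge\beta>0$. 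Hence such a sequence cannot exist, and there are $\tilde{a}_2\in(0,a_2)$ and $\beta_1>0$ with $\norm{u_a}_{\infty}\ge\beta_1$ for every $a\in(0,\tilde{a}_2)$.

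There is no real obstacle here beyond bookkeeping: the subtle point is simply recognising that the mountain pass level $\beta$ is uniform in $a\in(0,a_1)$ (already established in Proposition~\ref{I_a properties}-(iii)), so the positive energy floor is available to contradict any hypothetical $L^{\infty}$-collapse. All other estimates are immediate consequences of \ref{f1}, the $L^{1}$-integrability of $g$, and the already-known uniform $\dot{H}^{s}$ bound from Theorem~\ref{existence}.
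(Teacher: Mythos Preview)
Your proof is correct, but it proceeds by a different route than the paper's. The paper argues directly: from $I_a(u_a)\ge\beta$ and the crude lower bound $F_a(t)\ge -a|t|$ it first extracts a uniform lower bound $[u_a]_{s,2}^2\ge 2\beta_0>0$ for all $a<\tilde{a}_2$ (with $\tilde{a}_2$ chosen explicitly), and then feeds this into the critical-point identity $[u_a]_{s,2}^2=\int g f_a(u_a)u_a$ together with the full subcritical growth $|f_a(t)|\le C(1+|t|^{2^*_s-1})$ to get $\beta_0\le C\|g\|_1(\|u_a\|_\infty+\|u_a\|_\infty^{2^*_s})$, which forces $\|u_a\|_\infty\ge\beta_1$. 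Your contradiction argument instead exploits only the behaviour of $f$ near the origin from \ref{f1}, which is arguably the more economical hypothesis for this step; on the other hand, the paper's direct approach yields an explicit threshold $\tilde{a}_2$ in terms of $\beta$ and norms of $g$, whereas your argument is non-constructive. Both rest on the same two pillars---the uniform mountain-pass floor $I_a(u_a)\ge\beta$ and the critical-point equation---so the difference is one of packaging rather than substance.
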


\begin{proof}
By the definition, $F_a(t) \ge -a|t|$, for all $t \in \R$. For $\beta$ as given in Proposition \ref{I_a properties}  we see that $I_a(u_a) \ge \be$, for all $a \in (0,a_2)$. Hence using the uniform boundedness of $(u_a)$ in $\hst$ (Theorem~\ref{existence}), we get for all $a < a_2$, 
\begin{align*}
    \frac{[u_a]_{s,2}^2}{2} = I_a(u_a) + \intRn gF_a(u_a) \ge \be - a\intRn g|u_a| \ge \be - aC \left(\norm{g}_1 \norm{g}_{\frac{N}{2s}} \right)^{\frac{1}{2}}.
\end{align*}
Choose $0 < \tilde{a}_2 < \min \left\{ \be C^{-1} \left(\norm{g}_1 \norm{g}_{\frac{N}{2s}} \right)^{-\frac{1}{2}}, a_2 \right\}$. Then 
$$ \frac{[u_a]_{s,2}^2}{2}  \ge \be_0 :=\be - \tilde{a}_2 C\left(\norm{g}_1 \norm{g}_{\frac{N}{2s}}\right)^{\frac{1}{2}}>0, \quad \forall \, a \in (0,\tilde{a}_2).$$ 
Hence using $|f_a(u_a)| \le C (1+ |u_a|^{2^*_s-1})$ and the fact that $u_a \in L^{\infty}(\RN)$ (Theorem~\ref{regularity}), we get
\begin{align*}
    \be_0 \le \frac{1}{2}\intRn g \abs{f_a(u_a)u_a} \le C \norm{g}_1 \left( \norm{u_a}_{\infty} + \norm{u_a}^{2^*_s}_{\infty} \right), 
\end{align*}
where $C$ does not depend on $a$. Therefore, there exists $\be_1 >0$ such that $\norm{u_a}_{\infty} \ge \be_1$, for all $ a \in (0,\tilde{a}_2)$.
\end{proof}

\section{Positivity of the solutions}
This section contains the proof of Theorem~\ref{result}. Afterwards, we give an example of a function satisfying all the hypotheses in this paper. The idea of our proof for the positivity of solutions is motivated by \cite[Theorem~1.1]{AHS20} (also, see  \cite[Theorem~4.14]{BDS23}).

\noi \textbf{Proof of Theorem~\ref{result}:} (i) For $\tilde{a_2}$ as in Proposition \ref{lowerbound}, take $a_3 \in (0, \tilde{a_2})$.  Let $(a_n)$ be a sequence in $(0,a_3)$ such that $a_n \ra 0$ as $n \ra \infty$. We aim to show that $u_{a_n}$ is nonnegative on $\RN$ for large $n$.  
By Theorem~\ref{existence}, $u_{a_n} \in \hst$ is a mountain pass solution of \eqref{SP}, such that the following hold (up to a subsequence):
\begin{align}\label{PS1}
    I_{a_n}'(u_{a_n}) = 0, \text{ for each } n \in \N, \; I_{a_n}(u_{a_n}) \ra c, \text{ as } n \ra \infty, \text{ and } \norm{u_{a_n}}_{s,2} \le C. 
\end{align}
Therefore, $(u_{a_n})$ is a bounded Palais-Smale sequence in $\hst$. For brevity, we denote the sequence $(u_{a_n})$ by $(u_n)$. Since $I_a$ satisfies the Cerami condition, using the same arguments as in Proposition \ref{I_a properties}-(i) (replacing $a$ by $a_n$), we obtain that (up to a subsequence) $u_n \ra \tilde{u}$ in $\hst$, and  $u_n(x) \ra \tilde{u}(x)$ a.e. in $\RN$. We split the rest of our proof into two steps. In the first step, we prove that  $\tilde{u}$ is nonnegative and $(u_n)$ converges uniformly to $\tilde{u}$ on $\RN$. In the second step, we obtain the non-negativity of $u_n$.

\noi  \textbf{Step 1:} We consider the following function:
\begin{align*}
    f_0(t) = \left\{\begin{array}{ll} 
            f(t) , & \text {if }  t \ge 0; \\ 
            0, & \text{if} \; t \le 0.  \\
             \end{array} \right.
\end{align*} 
Since $a_n \ra 0$ in $\R^+$ and $u_n \ra \tilde{u}$ in $\hst$, using (ii) of Proposition \ref{compact map} it follows that 
\begin{align*}
    \lim_{n \ra \infty} \intRn g(x)f_{a_n}(u_n) \phi(x) \, \dx = \intRn g(x) f_0(\tilde{u}) \phi(x) \, \dx, \quad \forall \, \phi \in \hst. 
\end{align*}
So we have the following identity for every $\phi \in \hst$:
\begin{align*}
    \iint\limits_{\RN \times \RN}\frac{(\tilde{u}(x)- \tilde{u}(y)(\phi(x) - \phi(y))}{|x-y|^{N+2s}}\,\dx\dy & = \lim_{n \ra \infty} \iint\limits_{\RN \times \RN}\frac{(u_n(x)- u_n(y)(\phi(x) - \phi(y))}{|x-y|^{N+2s}}\,\dx\dy \\
    & = \lim_{n \ra \infty} \intRn g(x)f_{a_n}(u_n) \phi(x) \, \dx = \intRn g(x) f_0(\tilde{u}) \phi(x) \, \dx.
\end{align*}
Therefore, $\tilde{u}$ satisfies the following equation weakly:
\begin{equation}\label{P2}
\begin{aligned} 
 (-\De)^s u =  g(x)f_0(u) \text{ in } \RN.
 \end{aligned}
 \end{equation}
Since  $\abs{x}^{2s-N}$ is a fundamental solution of $(-\De)^s$ (see \cite[Theorem 5]{St2019}), we get 
 \begin{align}\label{Riesz1}
     \tilde{u}(x) = C(N,s) \intRn \frac{g(y)f_0(\tilde{u}(y))}{|x-y|^{N-2s}} \, \dy \ge 0 \; \text{ a.e. in } \RN.
 \end{align}
Further, using the similar set of arguments as given in Theorem~\ref{regularity}, $\tilde{u} \in L^{\infty}(\RN) \cap \C(\RN)$. Moreover, since $u_n$ is a solution of \eqref{SP}, we also have
\begin{align}\label{Riesz2}
    u_n(x) = C(N,s) \intRn \frac{g(y)f_{a_n}(u_n(y))}{|x-y|^{N-2s}} \, \dy \; \text{ a.e. in } \RN.
\end{align}
Using \eqref{Riesz1} and \eqref{Riesz2} we estimate $\abs{u_n - \tilde{u}}$ as follows: 
\begin{align}\label{positive1}
    &\abs{u_n(x) - \tilde{u}(x)} \le \no \\
    &C(N,s) \left( \int_{B_1(x)} g(y) \frac{\abs{f_{a_n}(u_n(y)) - f_0(\tilde{u}(y))}}{|x-y|^{N-2s}} \, \dy +  \int_{\RN \setminus B_1(x)} g(y) \frac{\abs{f_{a_n}(u_n(y)) - f_0(\tilde{u}(y))}}{|x-y|^{N-2s}} \, \dy \right).
\end{align}
Take $1< \de < \frac{N}{N-2s}$. Applying the H\"{o}lder's inequality with the conjugate pair $(\de, \de')$ we estimate the first integral of \eqref{positive1} as
\begin{align*}
    \int_{B_1(x)} g(y) \frac{\abs{f_{a_n}(u_n(y)) - f_0(\tilde{u}(y))}}{|x-y|^{N-2s}} \, \dy \le & \left( \int_{B_1(x)} \frac{1}{|x-y|^{(N-2s)\de}}  \, \dy \right)^{\frac{1}{\de}} \\
    & \left(  \int_{B_1(x)} g(y)^{\de'} \abs{f_{a_n}(u_n(y)) - f_0(\tilde{u}(y))}^{\de'}  \, \dy \right)^{\frac{1}{\de'}}. 
\end{align*}
We calculate $\int_{B_1(x)} \frac{1}{|x-y|^{(N-2s)\de}}  \, \dy = \om_N \int_0^1 \frac{r^{N-1}}{r^{(N-2s)\de}} \, \dr\le C(N).$ We show that the second integral of the above inequality converges to zero. Observe that   
\begin{align}\label{positive2}
    \abs{f_{a_n}(u_n(y)) - f_0(\tilde{u}(y))}^{\de'} \le 2^{\de'-1}\left( a_n^{\de'} + \abs{f_0(u_n(y)) - f_0(\tilde{u}(y))}^{\de'} \right),
\end{align}
where using \eqref{growth 2} and the uniform boundedness of the mountain pass solution in $L^{\infty}(\RN)$ (Theorem~\ref{regularity}), we get $\abs{f_0(u_n(y)) - f_0(\tilde{u}(y))}^{\de'} \le C2^{\de'-1}(1+ \abs{u_n(y)}^{(2^*_s-1)\de'} + \abs{\tilde{u}(y)}^{(2^*_s-1)\de'})\le C$. Moreover, $f_0(u_n(y)) \ra f_0(\tilde{u}(y)$ and $a_n \ra 0$ as $n \ra \infty$. Therefore, by the dominated convergence theorem, $\int_{B_1(x)} g(y)^{\de'} ( a_n^{\de'} + \abs{f_0(u_n(y)) - f_0(\tilde{u}(y))}^{\de'}) \, \dy \ra 0$. Hence using \eqref{positive2} and the generalized dominated convergence theorem, we conclude
\begin{align*}
    \int_{B_1(x)} g(y)^{\de'} \abs{f_{a_n}(u_n(y)) - f_0(\tilde{u}(y))}^{\de'}  \, \dy \ra 0, \text{ as } n \ra \infty. 
\end{align*}
Next, the second integral of \eqref{positive1} has the following bound: 
\begin{align*}
    \int_{\RN \setminus B_1(x)} g(y) \frac{\abs{f_{a_n}(u_n(y)) - f_0(\tilde{u}(y))}}{|x-y|^{N-2s}} \, \dy \le  \int_{\RN \setminus B_1(x)} g(y) \abs{f_{a_n}(u_n(y)) - f_0(\tilde{u}(y))}. 
\end{align*}
Again by the generalized dominated convergence theorem, $\int_{\RN \setminus B_1(x)} g(y) \abs{f_{a_n}(u_n(y)) - f_0(\tilde{u}(y))} \ra 0$. Therefore, \eqref{positive1} yields $u_n \ra \tilde{u}$ in $L^{\infty}(\RN)$ as $n \ra \infty$. Thus $(u_n)$ converges uniformly to $\tilde{u}$ on $\R^N$.

\noi \textbf{Step 2:} Now $\tilde{u} \in \C(\RN) \cap L^{\infty}(\RN)$ is a non-negative function and satisfies $(-\De)^s \tilde{u} \ge 0$ in the weak sense in $\RN$ (from \eqref{P2}). Suppose $\tilde{u}(x_0)=0$ for some $x_0 \in \RN$. Since $\tilde{u}$ satisfies all the properties of the strong maximum principle \cite[Proposition 5.2.1]{DMV17}, we conclude that $\tilde{u}$ vanishes identically on $\RN$. Further, from the uniform lower bound of $(u_n)$ in Proposition \ref{lowerbound} and the uniform convergence of $u_n \ra \tilde{u}$ (Step 1), there exists $\be_2>0$ such that $\norm{\tilde{u}}_{\infty} \ge \be_2$, a contradiction. Thus $\tilde{u} \neq 0$ on $\RN$ and \cite[Proposition 5.2.1]{DMV17} yields $\tilde{u} >0$ on $\RN$. Therefore, again from the uniform convergence of $(u_n)$, there exists $n_1 \in \N$ such that for all $n \ge n_1$, $u_n \ge 0$ on $\RN$.

\noi (ii) For a sequence $(a_n)$ given in (i), we show $u_{a_n}$ (denoted by $u_n$) is positive on $\RN$ for large $n$. For each $n \in \N$, since $f_{a_n}$ is locally Lipschitz (from \ref{f3}) and $0 \le u_n, \tilde{u} \le C$, we have $|f_{a_n}(u_n(y)) - f_{a_n}(\tilde{u}(y))| \le M |u_n(y)-\tilde{u}(y)|$ for some $M>0$. For $x\in \RN \setminus \{0\}$, using \eqref{Riesz1} and \eqref{Riesz2} we write
\begin{align*}
     \abs{u_n(x) - \tilde{u}(x)} & \le C(N,s) \left( M  \intRn \frac{g(y)|u_n(y)-\tilde{u}(y)|}{|x-y|^{N-2s}} \, \dy + a_n \intRn \frac{g(y)}{|x-y|^{N-2s}} \, \dy \right).
\end{align*}
Since $g$ satisfies \ref{g1}, from the above inequality and Step 1 we get
\begin{align*}
    \abs{u_n(x) - \tilde{u}(x)} \le C(N,s) \left( M \norm{u_n-\tilde{u}}_{\infty} + a_n \right) \frac{C(g)}{|x|^{N-2s}}.
\end{align*}
Hence 
\begin{align}\label{limit}
    \underset{{x \in \RN \setminus \{0\}}}{\sup}\left\{ |x|^{N-2s} \abs{u_n(x) - \tilde{u}(x)} \right\} \ra 0, \text{ as } n \ra \infty.
\end{align}
Now we show that $\underset{|x| \ra \infty}{\lim} |x|^{N-2s} \tilde{u}(x)>0$. Using \eqref{Riesz1} we get
\begin{align}\label{Riesz3}
   \lim_{|x| \ra \infty} |x|^{N-2s} \tilde{u}(x) & = C(N,s) \lim_{|x| \ra \infty} \intRn \frac{g(y)f_0(\tilde{u}(y))|x|^{N-2s}}{|x-y|^{N-2s}} \, \dy  \no \\
    & \ge C(N,s) \lim_{|x| \ra \infty}  \int_{B_R} \frac{g(y)f_0(\tilde{u}(y))|x|^{N-2s}}{|x-y|^{N-2s}} \, \dy, 
\end{align}
for any $R>0$. Choose $R>0$ arbitrarily. Then there exists $x \in \RN$ such that $|x|>2R+1$. Hence 
\begin{align*}
    |x-y|^{N-2s} \ge \abs{|x|-|y|}^{N-2s} \ge \abs{|x|-R}^{N-2s} \ge 2^{2s-N} \left(1+ \abs{x} \right)^{N-2s}, \; \text{ for } y \in B_R.
\end{align*}
Using the above estimate, for $y \in B_R$ we get $$\frac{g(y)f_0(\tilde{u}(y))|x|^{N-2s}}{|x-y|^{N-2s}} \le 2^{N-2s} g(y)f_0(\tilde{u}(y)).$$
Further, $\frac{g(y)f_0(\tilde{u})|x|^{N-2s}}{|x-y|^{N-2s}} \ra g(y)f_0(\tilde{u})$ a.e. in $B_R$, as $|x| \ra \infty$. Therefore, the dominated convergence theorem yields 
\begin{align*}
    \lim_{|x| \ra \infty}  \int_{B_R} \frac{g(y)f_0(\tilde{u}(y))|x|^{N-2s}}{|x-y|^{N-2s}} \, \dy = \int_{B_R} g(y)f_0(\tilde{u}(y)) \, \dy.
\end{align*}
Hence from \eqref{Riesz3} we conclude that 
\begin{align*}
    \lim_{|x| \ra \infty} |x|^{N-2s} \tilde{u}(x) \ge C(N,s) \int_{B_R} g(y)f_0(\tilde{u}(y)) \, \dy.
\end{align*}
Letting $R \ra \infty$ and applying the Fatou's lemma, $\lim_{|x| \ra \infty} |x|^{N-2s} \tilde{u}(x) \ge C(N,s) \int_{\R^N} g(y)f_0(\tilde{u}(y)) \, \dy.$ Further, using \eqref{Riesz1} and $\tilde{u}>0$ on $\RN$,
it follows that $gf_0(\tilde{u}) \gneqq 0$ on $\R^N$. Hence,  $\lim_{|x| \ra \infty} |x|^{N-2s} \tilde{u}(x) >0.$ Therefore, from \eqref{limit} there exists $n_2 \in \N$ and $R > > 1$ such that for $n \ge n_2$, $u_n >0$ on $B_R^c$. Moreover, since $\tilde{u} \in \C(\RN)$, there exists $\eta> 0$ such that $\tilde{u} > \eta$ on $\overline{B_R}$. Therefore, from the uniform convergence of $(u_n)$ (in Step 1), there exists $n_3 \in \N$ such that for $n \ge n_3$, $u_n >0$ on $\overline{B_R}$. Thus, by choosing $n_4=\max\{n_2,n_3\}$, we see that for $n \ge n_4$, $u_n>0$ on $\RN$. This completes the proof. \qed 

\begin{example}\label{example1}
Let $s \in (0,1)$ and $N>2s$. For $R>0$, we consider the following functions 
\begin{align*}
    f(t)=2t\ln (1+|t|), \text{ for } t \in \R^+; \quad g(y)= \frac{\chi_{B_R(0)}(y)}{(1+\abs{y})^{2(N-2s)}}, \text{ for } y \in \RN.
\end{align*}
(i) We can verify that $f$ satisfies \ref{f1}-\ref{f3} and \ref{f1.1}. \\
\noi (ii) Clearly $g \in L^1(\RN) \cap L^{\infty}(\RN)$. We show that $g$ satisfies \ref{g1}. For $x \in \RN \setminus \{0\}$, split 
\begin{align*}
    \int_{\RN} \frac{g(y)}{\abs{x-y}^{N-2s}} \, \dy = \int_{\abs{x-y}\ge \frac{\abs{x}}{2}} \frac{g(y)}{\abs{x-y}^{N-2s}} \, \dy +  \int_{\abs{x-y}\le \frac{\abs{x}}{2}} \frac{g(y)}{\abs{x-y}^{N-2s}} \, \dy.
\end{align*}
The first integral has the following bound: 
\begin{align*}
    \int_{\abs{x-y}\ge \frac{\abs{x}}{2}} \frac{g(y)}{\abs{x-y}^{N-2s}} \, \dy \le \left( \frac{2}{\abs{x}} \right)^{N-2s} \norm{g}_{1}.
\end{align*}
Now consider the case $\abs{x-y}\le \frac{\abs{x}}{2}$. Set $z=x-y$. Then $\abs{x-z}\ge \abs{ \abs{x} - \abs{z} } \ge \frac{\abs{x}}{2}$ and hence $\abs{x-z} \ge \abs{z}$. Using the fact that $g(y) \le g(\frac{x}{2})$ and $g(y) \le g(z)$ we obtain 
\begin{align*}
    \int_{\abs{x-y}\le \frac{\abs{x}}{2}} \frac{g(y)}{\abs{x-y}^{N-2s}} \, \dy \le \int_{\abs{z} \le \frac{\abs{x}}{2}} \frac{(g(\frac{x}{2}) g(z))^{\frac{1}{2}}}{\abs{z}^{N-2s}} \, \dz & \le \frac{2^{N-2s}\chi_{B_R(0)}(\frac{x}{2})}{(2+\abs{x})^{N-2s}} \int_{B_R(0)} \frac{\dz}{(1+\abs{z})^{N-2s} \abs{z}^{N-2s}} \\
    & \le \left( \frac{2}{\abs{x}} \right)^{N-2s} \omega(N) \int_0^R \frac{r^{2s-1}}{(1+r)^{N-2s}}  \, \dr \\
    &\le \left( \frac{2}{\abs{x}} \right)^{N-2s} C(N),
\end{align*}
for some constant $C(N)$. Here $\omega(N)$ is the measure of $B_1(0)$ in $\RN$. Therefore, $$\abs{x}^{N-2s} \int_{\RN} \frac{g(y)}{\abs{x-y}^{N-2s}} \, \dy \le C(g, N)$$ for $x \in \RN \setminus \{0\}$. 
\end{example}

\bibliographystyle{plainurl}

\end{document}